\def\timestamp{%
Time-stamp: <elemChains-submit.tex: Thursday 08-09-2011 at 16:36:16 (cest)>}
\def\stripname Time-stamp: <#1 #2>{#2}
\edef\filedate{\expandafter\stripname\timestamp}
\newcommand\orpr[2]{\langle#1,#2\rangle}
\DeclareMathSymbol\restr2{AMSa}{"16} 
\newcommand\axiom{\mathsf}
\newcommand\CH{\axiom{CH}}
\newcommand\OCA{\axiom{OCA}}
\newcommand\PFA{\axiom{PFA}}
\newcommand\ZFC{\axiom{ZFC}}
\newcommand\pr{\operatorname{pr}}
\newcommand\cl{\operatorname{cl}}
\newcommand\Int{\operatorname{int}}
\theoremstyle{plain}
\newtheorem{theorem}{Theorem}[section]
\newtheorem{lemma}[theorem]{Lemma}
\newtheorem{proposition}[theorem]{Proposition}
\newtheorem{corollary}[theorem]{Corollary}
\theoremstyle{definition}
\newtheorem{definition}[theorem]{Definition}
\newtheorem*{claim}{Claim}
\theoremstyle{remark}
\newtheorem{question}{Question}[section]
\numberwithin{equation}{section}
\begin{document}
\title{Elementary chains and  compact spaces with a small diagonal}

\author[Alan Dow]{Alan Dow\dag}
\address{
Department of Mathematics\\
UNC-Charlotte\\
9201 University City Blvd. \\
Charlotte, NC 28223-0001}
\email{adow@uncc.edu}
\urladdr{http://www.math.uncc.edu/\~{}adow}
\thanks{\dag Research of the first author was supported by NSF grant 
        No.\ NSF-DMS-0901168.} 

\author{Klaas Pieter Hart}
\address{Faculty of Electrical Engineering, Mathematics and Computer Science\\
         TU Delft\\
         Postbus 5031\\
         2600~GA {} Delft\\
         the Netherlands}
\email{k.p.hart@tudelft.nl}
\urladdr{http://fa.its.tudelft.nl/\~{}hart}

\keywords{small diagonal, metrizable, elementary submodels}
\subjclass{54A20, 54G20, 54A35, 03E35, 54G12}


\date{\filedate}

\begin{abstract}
It is a well known open problem if, in $\ZFC$, each compact space with a small 
diagonal is metrizable. 
We explore properties of compact spaces with a small diagonal using 
elementary chains of submodels. 
We prove that ccc subspaces of such spaces have countable $\pi$-weight. 
We generalize a result of Gruenhage about spaces which are metrizably fibered.
Finally we discover that if there is a Luzin set of reals, then every compact 
space with a small diagonal will have many points of countable character. 
\end{abstract}

\maketitle


\section*{Introduction}

In~\cite{Husek} Hu\v{s}ek defined a space, $X$, to have an 
\emph{$\omega_1$-accessible diagonal} if there is a $\omega_1$-sequence
$\bigl\langle\orpr{x_\alpha}{y_\alpha}:\alpha<\omega_1\bigr\rangle$
in~$X^2$ that converges to the diagonal~$\Delta(X)$ in the sense that
every neighbourhood of the diagonal contains a tail of the sequence.
Hu\v{s}ek also mentions that Van Douwen referred to spaces that do 
\emph{not} have an $\omega_1$-accessible diagonal as 
\emph{having a small diagonal}.
The latter has gained more currency since Zhou's~\cite{ZhouDiag} and 
the definition has been cast in a more positive form:
~$X$~has a small diagonal if every uncountable subset of~$X^2$ that is disjoint
from the diagonal has an uncountable subset whose closure is disjoint from the
diagonal. 
For brevity's sake we will say that a space is csD if it is a \textbf{c}ompact 
Hausdorff space with a \textbf{s}mall \textbf{D}iagonal. 

There are a number of very interesting results known for csD
spaces and we recommend \cite{GaryDiag} as an excellent reference. 
In particular, it is known that csD spaces have countable tightness
 (\cite{JuSzCvgt}) and 
that it follows from $\CH$ that csD spaces are metrizable 
(\cites{Husek,JuSzCvgt}).
One of our main results is that ccc subspaces of a csD space have countable 
$\pi$-weight. 
In~\cite{Tkachuk94} Tkachuk describes a space as \emph{metrizably fibered} 
if there is a continuous map onto a metric space with the property that 
each point preimage (fiber) is also metrizable. 
Let us  say that a space is \emph{weight $\kappa$ fibered} if the obvious 
generalization is satisfied: there is a map onto a space with weight at 
most~$\kappa$ so that each fiber also has weight at most~$\kappa$. 
In~\cite{GaryDiag} Gruenhage showed that a metrizably fibered csD space is 
metrizable. 
We will show that this is also true for weight $\omega_1$ fibered spaces. 

Though the main question on csD spaces is whether they are metrizable it is 
at present not even known if they must have points of countable character. 
We uncover the surprising connection that if there is a Luzin set of reals, 
then each csD space does have points of countable character.

\section{Preliminaries}

We begin by citing a convenient characterization of csD spaces obtained
by Gruenhage in~\cite{GaryDiag}.
We say that a sequence 
$\bigl\langle\orpr{x_\alpha}{y_\alpha}:\alpha\in \omega_1\bigr\rangle$ 
of pairs is \emph{$\omega_1$-separated} if there is an uncountable subset~$A$
of~$\omega_1$ such that 
$\{x_\alpha:\alpha\in A\}$ and $\{y_\alpha:\alpha\in A\}$ have disjoint closures.
Gruenhage showed that a compact space is csD if and only if every uncountable 
sequence of pairs is $\omega_1$-separated.

\subsection{Elementary sequences}

The key to uncovering why a non-metric compact space might not be csD is to 
select the right $\omega_1$-sequence of pairs. 
We will explore a method of using chains of countable elementary 
submodels for this purpose. 

For a cardinal~$\theta$ we let $H(\theta)$ denote the collection of all sets 
whose transitive closure has cardinality less than~$\theta$ 
(see~\cite{Kunen80}*{Chapter~IV}).   
An $\omega_1$-sequence $\langle M_\alpha:\alpha<\omega_1\rangle$ of countable 
elementary substructures of~$H(\theta)$ that satisfies 
$\langle M_\beta:\beta\le\alpha\rangle\in M_{\alpha+1}$ for all~$\alpha$
and $M_\alpha=\bigcup_{\beta<\alpha}M_\beta$ for all limit~$\alpha$
will simply be called an \emph{elementary sequence}. 

It will be convenient to assume that the spaces considered in this paper are 
subspaces of~$[0,1]^\kappa$ for some suitable cardinal~$\kappa$
(usually the weight of the space under consideration).
By a basic open subset of~$[0,1]^\kappa$ we mean a set that is a product 
$\prod_{\xi\in\kappa} I_\xi$ where each~$I_\xi$ is a relatively open subinterval 
of~$[0,1]$ with rational endpoints such that the set of~$\xi\in\kappa$
for which $I_\xi\neq[0,1]$ --- its \emph{support} --- is finite. 
When a space $X$ is a subspace of $[0,1]^\kappa$ we will use the intersections
of these basic open sets with~$X$ as a base for~$X$.

\subsubsection*{Elementary sequences for a space} 

Let $X$ be a compact space and assume that $X$~is a subset 
of~$[0,1]^\kappa$ where $\kappa$~is the weight of~$X$. 
An \emph{elementary sequence for~$X$} will be an elementary sequence
$\langle M_\alpha:\alpha<\omega_1\rangle$ in~$H\bigl((2^\kappa)^+\bigr)$
such that $X\in M_0$.
  
For each $\alpha\in \omega_1$ we associate two spaces with $X$ and $M_\alpha$:
the first is the closure of $X\cap M_\alpha$, which we denote~$X_\alpha$.
The second is the image of~$X$ under the projection~$\pr_{M_\alpha}$ 
from~$[0,1]^\kappa$ onto~$[0,1]^{M_\alpha\cap \kappa}$:
we write $X_{M_\alpha}=\pr_{M_\alpha}[X]$.

For each $x\in X$ and $\alpha\in \omega_1$ we often denote $\pr_{M_\alpha}(x)$
by $x\restr M_\alpha$ and we write
$[x\restr M_\alpha]=\{y\in X:\pr_{M_\alpha}(y)=\pr_{M_\alpha}(x)\}$.  

We will apply Gruenhage's criterion to sequences of pairs associated to
elementary sequences.

\begin{definition} 
An elementary $\omega_1$-sequence of pairs for a space $X$ is a sequence 
$\bigl<\orpr{x_\alpha}{y_\alpha}:\alpha\in\omega_1\bigr>$ of pairs of points
from~$X$ for which there is some elementary sequence 
$\langle M_\alpha:\alpha\in\omega_1\rangle$ for~$X$
so that $\{x_\alpha,y_\alpha\}\in M_{\alpha+1}$, \ $x_\alpha\neq y_\alpha$ and 
$x_\alpha\restr M_\alpha = y_\alpha \restr M_\alpha$ 
for each $\alpha\in \omega_1$.
\end{definition}

The (seemingly narrow) gap between metrizability and being csD in the class
of compact spaces is revealed in the next two propositions.

\begin{proposition}
A compact space is metrizable if and only if it has no elementary
 $\omega_1$-sequence of pairs. 
\end{proposition}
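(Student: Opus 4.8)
The plan is to prove both directions, with the easy direction being that a metrizable compact space has no elementary $\omega_1$-sequence of pairs, and the substantive direction being the converse.

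For the easy direction: suppose $X$ is compact metrizable, so $\kappa = \omega$ may be taken, i.e. $X \subseteq [0,1]^\omega$. Given any elementary sequence $\langle M_\alpha : \alpha \in \omega_1\rangle$ for $X$, since each $M_\alpha$ is countable and $\kappa = \omega$, we have $M_\alpha \cap \kappa = \omega$ for all $\alpha$ with $\omega \subseteq M_\alpha$ — and in fact $\omega \subseteq M_0$ since $\omega$ is definable and $M_0$ is an elementary submodel. Hence $\pr_{M_\alpha}$ is the identity on $[0,1]^\omega$, so $x \restr M_\alpha = y \restr M_\alpha$ forces $x = y$, contradicting $x_\alpha \neq y_\alpha$. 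So no elementary $\omega_1$-sequence of pairs can exist.

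For the hard direction, I would prove the contrapositive: if $X$ is a non-metrizable csD-candidate (compact, non-metrizable), then it has an elementary $\omega_1$-sequence of pairs. Fix $\kappa = w(X) > \omega$ (if $X$ is compact and non-metrizable then its weight is uncountable) and an elementary sequence $\langle M_\alpha : \alpha \in \omega_1\rangle$ for $X$ in $H\bigl((2^\kappa)^+\bigr)$ with $X \in M_0$. The task is to build, by recursion on $\alpha$, pairs $x_\alpha \neq y_\alpha$ in $X$ with $x_\alpha \restr M_\alpha = y_\alpha \restr M_\alpha$ and $\{x_\alpha, y_\alpha\} \in M_{\alpha+1}$. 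The key claim to establish is that for each $\alpha$ the set $X_{M_\alpha} = \pr_{M_\alpha}[X]$ is a \emph{proper} quotient of $X$, i.e. $\pr_{M_\alpha} \restriction X$ is not injective; equivalently some fiber $[x \restr M_\alpha]$ is non-degenerate. Granting this, pick any such $x$ and a second point $y$ in its fiber; since $\langle M_\beta : \beta \le \alpha \rangle \in M_{\alpha+1}$ and $X \in M_{\alpha+1}$, elementarity of $M_{\alpha+1}$ lets us choose the pair $\{x_\alpha, y_\alpha\}$ inside $M_{\alpha+1}$ (the statement ``there exist distinct $x,y \in X$ with $\pr_{M_\alpha}(x) = \pr_{M_\alpha}(y)$'' has parameters $X, M_\alpha \cap \kappa$ available in $M_{\alpha+1}$, so a witnessing pair can be found there). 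This yields the desired sequence.

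The main obstacle is verifying the key claim that $\pr_{M_\alpha} \restriction X$ fails to be injective for club-many (indeed all) $\alpha$. The point is that $M_\alpha \cap \kappa$ is a countable subset of the uncountable index set $\kappa$, so $[0,1]^{M_\alpha \cap \kappa}$ is second countable; if $\pr_{M_\alpha} \restriction X$ were injective it would be a continuous bijection from the compact space $X$ onto a second-countable Hausdorff space, hence a homeomorphism, making $X$ metrizable — contradiction. Thus injectivity fails and a non-degenerate fiber exists; this works for \emph{every} $\alpha$, not just a club, because $M_\alpha \cap \kappa$ is always countable while $\kappa$ is not. The only subtlety is the elementarity bookkeeping ensuring $\{x_\alpha, y_\alpha\}$ can be placed in $M_{\alpha+1}$, which is routine since $M_\alpha \cap \kappa \in M_{\alpha+1}$ (as $\langle M_\beta : \beta \le \alpha\rangle \in M_{\alpha+1}$) and $X \in M_0 \subseteq M_{\alpha+1}$, so $M_{\alpha+1}$ sees the non-injectivity of $\pr_{M_\alpha} \restriction X$ and can choose a witnessing pair.
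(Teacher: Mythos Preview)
Your proposal is correct and follows exactly the approach the paper suggests: the paper's entire ``proof'' is the one-line remark that the contrapositive form --- a compact space has uncountable weight iff it has an elementary $\omega_1$-sequence of pairs --- is easier to verify, and you have correctly supplied the routine details (countable weight makes $\pr_{M_\alpha}$ injective; uncountable weight forces a non-degenerate fiber for every $\alpha$, and elementarity of $M_{\alpha+1}$ lets you pick the witnessing pair there). The stray phrase ``csD-candidate'' is irrelevant to the argument and should simply be dropped, since the proposition concerns arbitrary compact spaces.
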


It is somewhat easier to prove the contrapositive form: a compact space
has uncountable weight iff it has an elementary $\omega_1$-sequence of pairs.

\begin{proposition} 
A compact space $X$ is not csD if and only if it has an elementary 
$\omega_1$-sequence of pairs that is not $\omega_1$-separated.
\end{proposition}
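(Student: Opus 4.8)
The plan is to prove both directions by passing between an arbitrary $\omega_1$-sequence of pairs witnessing that $X$ is not csD and an elementary $\omega_1$-sequence of pairs, using Gruenhage's criterion from the Preliminaries. For the easy direction, suppose $X$ has an elementary $\omega_1$-sequence of pairs $\bigl<\orpr{x_\alpha}{y_\alpha}:\alpha\in\omega_1\bigr>$ that is not $\omega_1$-separated. Since it is a sequence of pairs off the diagonal that is not $\omega_1$-separated, Gruenhage's characterization immediately gives that $X$ is not csD. (Here I should double-check the trivial point that $x_\alpha\neq y_\alpha$, which is built into the definition.) So only the forward direction requires work.

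For the forward direction, assume $X$ is not csD. By Gruenhage's criterion there is an $\omega_1$-sequence of pairs $\bigl<\orpr{a_\alpha}{b_\alpha}:\alpha\in\omega_1\bigr>$ in $X^2$ that is \emph{not} $\omega_1$-separated, i.e.\ for every uncountable $A\subseteq\omega_1$ the closures of $\{a_\alpha:\alpha\in A\}$ and $\{b_\alpha:\alpha\in A\}$ meet. First I would fix an elementary sequence $\langle M_\alpha:\alpha<\omega_1\rangle$ for $X$ that in addition has this whole sequence of pairs (coded as a single element of $H((2^\kappa)^+)$) as a member of $M_0$. Let $\delta=\bigcup_{\alpha<\omega_1}(M_\alpha\cap\omega_1)\le\omega_1$; by a standard closing-off argument the set $C$ of $\alpha<\omega_1$ with $M_\alpha\cap\omega_1=\alpha$ is a club. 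The idea is now to manufacture, for each $\alpha\in C$, a pair $\orpr{x_\alpha}{y_\alpha}$ that agrees under $\pr_{M_\alpha}$. The natural candidates are obtained from the original sequence: working inside $M_{\alpha+1}$ one can use elementarity together with compactness of $X$ (hence of the fibers $[x\restr M_\alpha]$, which are closed in $X$) to choose indices $\gamma$ above $\alpha$ so that $a_\gamma$ and $b_\gamma$, or suitable limits/selections built from them, land in the same $\pr_{M_\alpha}$-fiber while remaining distinct. Concretely, since $X_{M_\alpha}=\pr_{M_\alpha}[X]$ is a second-countable (indeed metrizable) image of $X$ and the tails of the original sequence are "dense enough" to force accumulation, one can pick $x_\alpha\in\cl\{a_\gamma:\gamma\ge\alpha\}$ and $y_\alpha\in\cl\{b_\gamma:\gamma\ge\alpha\}$ with $x_\alpha\restr M_\alpha=y_\alpha\restr M_\alpha$ and $x_\alpha\neq y_\alpha$, choosing them inside $M_{\alpha+1}$ so that $\{x_\alpha,y_\alpha\}\in M_{\alpha+1}$.

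The remaining point is that the resulting elementary $\omega_1$-sequence of pairs is still not $\omega_1$-separated. Given an uncountable $A\subseteq C$, the points $x_\alpha$ (for $\alpha\in A$) are accumulation points of tails of $\{a_\gamma\}$ and the $y_\alpha$ are accumulation points of tails of $\{b_\gamma\}$, so $\cl\{x_\alpha:\alpha\in A\}\subseteq\cl\{a_\gamma:\gamma<\omega_1\}$ and similarly for the $y$'s; a short argument using that $A$ is cofinal in $\omega_1$ and the original sequence is not $\omega_1$-separated shows these two closures must intersect. (One has to be a little careful to arrange this for \emph{all} uncountable $A$, not just cofinal ones, but one can first thin out to the case $A$ cofinal and then observe that any uncountable $A\subseteq C$ is cofinal in $\delta$; if $\delta<\omega_1$ one adjusts the construction so that $C$ is a club in $\omega_1$.)

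The main obstacle I anticipate is the selection step: ensuring that the fiber-matched pair $\orpr{x_\alpha}{y_\alpha}$ can be chosen \emph{distinct} while still controlling which tails of the original sequence they accumulate to, and doing this definably enough to put it into $M_{\alpha+1}$. If the original pairs happened to already satisfy $a_\gamma\restr M_\alpha=b_\gamma\restr M_\alpha$ for cofinally many $\gamma$ this is immediate, but in general one must use an accumulation/compactness argument and verify that the limit points remain off the diagonal — this is exactly where countable tightness of csD spaces (cited in the Introduction) or a direct compactness argument on the separable metric quotients $X_{M_\alpha}$ is likely to be invoked.
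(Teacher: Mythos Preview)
Your easy direction is fine. For the forward direction, however, the route you take is both more complicated than needed and contains a genuine gap.

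The paper's argument is much more direct: one simply \emph{selects} pairs from the original sequence rather than passing to accumulation points. Having placed the sequence $\bigl<\orpr{a_\alpha}{b_\alpha}:\alpha\in\omega_1\bigr>$ in $M_0$, for each $\gamma$ one looks for an index $\alpha_\gamma$ with $a_{\alpha_\gamma}\restr M_\gamma = b_{\alpha_\gamma}\restr M_\gamma$; if such an index exists, elementarity lets one choose it in $M_{\gamma+1}$, and then $\bigl<\orpr{a_{\alpha_\gamma}}{b_{\alpha_\gamma}}:\gamma\in\omega_1\bigr>$ is the desired elementary sequence. Because it is literally a subsequence of the original non-separated sequence, it is automatically not $\omega_1$-separated, and distinctness is free since $a_\alpha\neq b_\alpha$ throughout. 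The only thing left to check is that such an $\alpha_\gamma$ always exists. If it failed at some $\gamma$, then for co-countably many $\alpha$ one could separate $a_\alpha$ from $b_\alpha$ by a pair of basic open sets with support in $M_\gamma$; since $M_\gamma$ is countable there are only countably many such pairs, and pigeonhole yields an uncountable set on which a single pair works, contradicting non-separation. This pigeonhole step is the key idea you are missing.

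Your accumulation-point construction, by contrast, creates exactly the distinctness obstacle you flag, and more seriously your non-separation argument does not go through. You observe that $\cl\{x_\alpha:\alpha\in A\}\subseteq\cl\{a_\gamma:\gamma<\omega_1\}$ and similarly for the $y$'s, but this inclusion points the wrong way: knowing that the larger closures meet (which is all non-separation of the original sequence gives) says nothing about whether the smaller ones do. There is no evident ``short argument'' here, and in fact for an arbitrary uncountable $A$ one cannot expect to recover an uncountable subset of the original index set from the accumulation points alone. The fix is to abandon accumulation points entirely and select original pairs, as above.
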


\begin{proof} 
We will actually prove a stronger statement: each $\omega_1$-sequence of 
pairs that is not $\omega_1$-separated contains an elementary 
$\omega_1$-sequence. 
Suppose that $\bigl<\orpr{x_\alpha}{y_\alpha}:\alpha\in\omega_1\bigr>$ is an 
uncountable set of pairs of $X$ that is not $\omega_1$-separated.  
Choose any elementary sequence $\langle M_\gamma:\gamma\in\omega_1\rangle$ 
for~$X$ such that $\bigl<\orpr{x_\alpha}{y_\alpha}:\alpha\in\omega_1\bigr>$ 
is an element of~$M_0$.  
For each $\gamma\in \omega_1$, choose  $\alpha_\gamma$ (if possible) so that 
$x_{\alpha_\gamma}\restr M_\gamma=y_{\alpha_\gamma}\restr M_\gamma$.
By elementarity, if such an $\alpha_\gamma$ exists, it may be chosen 
in~$M_{\gamma+1}$.
Then $\bigl<\orpr{x_{\alpha_\gamma}}{y_{\alpha_\gamma}}:\gamma\in\omega_1\}$
is the desired elementary $\omega_1$-sequence that is not $\omega_1$-separated.

To finish the proof we show that it is always possible to choose 
an~$\alpha_\gamma$.
If not then there will be a~$\gamma$ such that the set~$A$ of~$\alpha$ for
which  $x_\alpha\restr M_\gamma\neq y_\alpha\restr M_\gamma$
is co-countable. 
For each $\alpha\in A$, there are basic open sets $U_\alpha$ and~$W_\alpha$ 
with disjoint closures and supports in $M_\gamma$ such that 
$x_\alpha\in U_\alpha$ and $y_\alpha\in W_\alpha$. 
As $U_\alpha$ and $W_\alpha$ are determined by finite subsets of~$M_\gamma$
they belong to~$M_\gamma$.
As $M_\gamma$~is countable one pair $\orpr{U}{W}$ would be chosen
uncountably often, say for~$\alpha\in B$.
The latter set would witness that our sequence is $\omega_1$-separated.
\end{proof}

\section{Applications of elementary $\omega_1$-sequences}

There will be occasions when one countable elementary substructure will
already do but in our first result a fair amount of care will go into
the construction of an elementary $\omega_1$-sequence of pairs.

\subsection{Cellularity and $\pi$-weight}

We need the notions of $\pi$-bases and local $\pi$-bases.
A $\pi$-base for a space is a collection of non-empty open sets such that
each non-empty open set contains one of them.
A local $\pi$-base for a space at a point is a collection of non-empty open 
sets such that each neighbourhood of the point contains one of them.
Thus one obtains the notions of $\pi$-weight and $\pi$-character:
minimum cardinalities of defining families.

A continuous surjection is said to be irreducible if no proper closed subset 
of the domain maps onto the range or, dually, the image of a set with non-empty
interior has non-empty interior as well.
The latter formulation easily implies that $\pi$-weight is invariant under
irreducible surjections, in both directions.

An easy application of Zorn's Lemma will show that if $f:X\to Y$ is a 
continuous surjection between compact Hausdorff spaces one can find a closed 
subset~$Z$ of~$X$ such that $f\restr Z:Z\to Y$ is irreducible and surjective.

\begin{lemma}\label{piweight} 
If a compact space $X$ has a ccc subspace with uncountable $\pi$-weight, 
then it has a compact ccc subspace  with $\pi$-weight equal to $\omega_1$.
\end{lemma}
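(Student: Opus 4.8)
The plan is to start with a ccc subspace $Y$ of $X$ with uncountable $\pi$-weight and extract a compact ccc subspace with $\pi$-weight exactly $\omega_1$. First I would pass to the closure: $\cl_X Y$ is compact, still ccc (cellularity is preserved under taking closures), and has $\pi$-weight at least that of $Y$, since $Y$ is dense in it. So without loss of generality $X$ itself is compact ccc with uncountable $\pi$-weight, and the goal becomes finding a compact ccc subspace of $\pi$-weight exactly $\omega_1$. I would try to do this by building an increasing continuous chain of closed ccc subspaces and stopping at stage $\omega_1$, so the main work is showing the chain can be made to strictly increase $\pi$-weight at each step while staying ccc.

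The natural tool is an elementary sequence $\langle M_\alpha:\alpha<\omega_1\rangle$ for $X$ (recall $X\subseteq[0,1]^\kappa$ with $\kappa$ the weight of $X$), and to consider the subspaces $X_\alpha=\cl(X\cap M_\alpha)$. Each $X_\alpha$ is a closed subspace; since $X$ is ccc and $M_\alpha$ is elementary, a maximal antichain of open sets of $X$ lying in $M_\alpha$ is contained in $M_\alpha$ (as it is countable), so $X\cap M_\alpha$ — hence $X_\alpha$ — is ccc. I expect that $\bigcup_{\alpha<\omega_1}X_\alpha$ need not be all of $X$, but $Z=\cl\bigl(\bigcup_{\alpha<\omega_1}X_\alpha\bigr)=\cl\bigl(X\cap\bigcup_\alpha M_\alpha\bigr)$ is a compact ccc subspace, and I would aim to arrange that its $\pi$-weight is exactly $\omega_1$: it is at most $\omega_1$ because the basic open sets of $[0,1]^\kappa$ with support in $\bigcup_\alpha M_\alpha\cap\kappa$ form a $\pi$-base for $Z$ of size $\omega_1$, and to see it is uncountable one argues that if it were countable, a single $X_\alpha$ would already contain a $\pi$-base, which by a reflection/elementarity argument forces $X$ itself to have countable $\pi$-weight, contrary to hypothesis.

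More carefully, the key reflection step is this: since $X$ has uncountable $\pi$-weight, no countable family of basic open sets is a $\pi$-base, and this statement is absolute enough that inside $M_\alpha$ one can always find a basic open set of $X$ (with support in $M_\alpha$) that is not refined by — i.e. contains no member of — any given countable subfamily already captured by earlier $M_\beta$'s. Thus along the chain new basic open sets keep appearing, and a diagonalization ensures the final $\pi$-weight is genuinely $\omega_1$ rather than collapsing to $\omega$. Concretely I would show: if $\mathcal{B}$ is a $\pi$-base for $Z$ consisting of traces of basic open sets with support in $N=\bigcup_\alpha M_\alpha\cap\kappa$, and $\mathcal{B}$ were countable, then $\mathcal{B}\in M_\alpha$ for some $\alpha$ (coding it by a real), whence by elementarity $M_\alpha\models$ "$\mathcal{B}$ is a $\pi$-base for $X$", contradicting that $X$ has uncountable $\pi$-weight. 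The main obstacle I anticipate is the ccc verification for the limit stages and for $Z$ itself: one must be sure that an antichain in $Z$ reflects down into some $M_\alpha$, which uses that each element of the antichain is the trace of a basic open set whose (finite) support lies in $\bigcup_\alpha M_\alpha\cap\kappa$, hence already in some single $M_\alpha$, so an uncountable antichain would descend to an uncountable antichain inside that countable model — impossible. Once ccc and the $\pi$-weight bounds are in hand, $Z$ is the desired compact ccc subspace of $\pi$-weight $\omega_1$.
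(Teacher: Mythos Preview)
There is a genuine gap. Your candidate subspace $Z=\cl\bigl(X\cap\bigcup_\alpha M_\alpha\bigr)$ need not have $\pi$-weight at most~$\omega_1$, and the sentence ``the basic open sets of $[0,1]^\kappa$ with support in $\bigcup_\alpha M_\alpha\cap\kappa$ form a $\pi$-base for~$Z$'' is not justified and is in fact false in general. Concretely: if $2^\omega\ge\omega_2$ then $X=2^{\omega_2}$ is compact, separable (hence ccc), and has $\pi$-weight exactly~$\omega_2$. Since $X$~is separable and $X\in M_0$, elementarity puts a countable dense subset of~$X$ inside~$M_0$, so already $\cl(X\cap M_0)=X$ and hence $Z=X$. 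Your claimed $\pi$-base consists of basic open sets with support in a set of size~$\omega_1$, and these simply do not refine basic open sets whose support meets~$\omega_2\setminus M$; so $\pi w(Z)=\omega_2$, not~$\omega_1$. (Your lower-bound step is also shaky: a countable $\pi$-base $\mathcal B$ for~$Z$ is a countable subset of~$M$ but need not be an \emph{element} of any~$M_\alpha$, and even if it were, it is a $\pi$-base for~$Z$, not for~$X$, so the elementarity transfer you invoke does not apply.)

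The paper's fix is to work with the \emph{projection} rather than the closed trace: with $K=\cl Y$ and $M=\bigcup_\alpha M_\alpha$, the image $K_M=\pr_M[K]\subseteq[0,1]^{M\cap\kappa}$ automatically has weight $\le\omega_1$, and one checks $\pi w(K_M)>\omega$ by producing, for each~$\alpha$, a basic open set in~$M_{\alpha+1}$ containing no nonempty open set from~$M_\alpha$. This gives a ccc compact space $K_M$ with $\pi w(K_M)=\omega_1$, but it is not a subspace of~$X$. To land back inside~$X$ one takes a closed $Z\subseteq K$ on which $\pr_M$ is irreducible onto~$K_M$; irreducibility transfers both ccc and $\pi$-weight from~$K_M$ to~$Z$. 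That irreducible-preimage step is exactly the missing idea in your outline.
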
 

\begin{proof} 
Let $Y$ be a ccc subspace that does not have a countable $\pi$-base. 
Let $K$ be the closure of~$Y$; then $K$~is compact, ccc, and does not have 
a countable $\pi$-base either. 
Let $\langle M_\alpha : \alpha \in \omega_1\rangle$ be an elementary sequence
for $K$ and let $M=\bigcup_{\alpha\in\omega_1}M_\alpha$.  
For each $\alpha\in\omega_1$ there is, by elementarity, a basic open 
subset~$U_\alpha$ of~$K$ that is an element of~$M_{\alpha+1}$ and does not 
contain any non-empty open subset of~$K$ that is a member of~$M_\alpha$.
It follows that $K_M=\pr_M[K]$ has $\pi$-weight $\omega_1$ since 
it has weight at most~$\omega_1$ and does not have countable $\pi$-weight.

Since $K_M$ is a continuous image of $K$, it is ccc. 
Now choose any compact $Z\subset K$ such that $\pr_M\restr Z:Z\to K_M$ 
is irreducible and onto.
Since the map is irreducible, $Z$~is ccc and has the same $\pi$-weight 
(namely~$\omega_1$) as $K_M$. 
\end{proof}

The first main result began as an attempt to see if there was a more direct 
proof, perhaps uncovering more about the nature of csD spaces, that a csD 
space has countable tightness.

\begin{theorem}\label{cccpiwt} 
In a csD space every ccc subspace has countable $\pi$-weight.
\end{theorem}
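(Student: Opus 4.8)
The plan is to prove the contrapositive, in the sharp form made available by Lemma~\ref{piweight}: it suffices to show that a \emph{compact} ccc space $Z$ with $\pi$-weight~$\omega_1$ cannot be csD. (A closed subspace $Z$ of a csD space $X$ is again csD, since $Z^2\cap\Delta(X)=\Delta(Z)$, so an uncountable subset of $Z^2$ missing $\Delta(Z)$ is an uncountable subset of $X^2$ missing $\Delta(X)$ and may be thinned inside $X^2$; and the compact ccc subspace produced by Lemma~\ref{piweight}, being compact in a Hausdorff space, is closed.) By the proposition above characterizing when a compact space fails to be csD --- equivalently, by Gruenhage's criterion recalled in Section~1 --- it is enough to exhibit an elementary $\omega_1$-sequence of pairs for such a $Z$ that is not $\omega_1$-separated. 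I would therefore assume, towards a contradiction, that $Z$ is csD, so that \emph{every} uncountable sequence of pairs in $Z^2\setminus\Delta(Z)$ is $\omega_1$-separated, and then build a sequence that refutes this.

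Fix $Z\subseteq[0,1]^\kappa$ with $\kappa=w(Z)$ and choose an elementary sequence $\langle M_\alpha:\alpha<\omega_1\rangle$ for $Z$. For each $\alpha$ pick, exactly as in the proof of Lemma~\ref{piweight}, a basic open $U_\alpha\in M_{\alpha+1}$ meeting $Z$ that contains no nonempty open subset of $Z$ lying in $M_\alpha$; such a $U_\alpha$ exists because otherwise the countably many nonempty open subsets of $Z$ that belong to $M_\alpha$ would form a $\pi$-base. The crucial observation at this stage is that $U_\alpha\cap Z$ cannot be saturated for $\pr_{M_\alpha}$: if it were, then $\pr_{M_\alpha}[U_\alpha\cap Z]$ would be open in the compact metric space $Z_{M_\alpha}$, hence a countable union of sets with finite support coded in $M_\alpha$, and pulling one of these back into $Z$ would yield a nonempty $M_\alpha$-coded open subset of $Z$ inside $U_\alpha$. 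Consequently some point of $U_\alpha\cap Z$ has its $\pr_{M_\alpha}$-fibre not contained in $U_\alpha$, so by elementarity we may choose $x_\alpha\in U_\alpha\cap Z$ and $y_\alpha\in Z\setminus U_\alpha$ with $x_\alpha\restr M_\alpha=y_\alpha\restr M_\alpha$ and $\{x_\alpha,y_\alpha\}\in M_{\alpha+1}$. Then $\bigl\langle\orpr{x_\alpha}{y_\alpha}:\alpha<\omega_1\bigr\rangle$ is an elementary $\omega_1$-sequence of pairs for $Z$, with the extra feature that $x_\alpha\in U_\alpha$ while $y_\alpha\notin U_\alpha$.

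Now suppose this sequence is $\omega_1$-separated, witnessed by an uncountable set $A$ and disjoint open sets $O\supseteq\overline{\{x_\alpha:\alpha\in A\}}$ and $O'\supseteq\overline{\{y_\alpha:\alpha\in A\}}$, which by regularity of $Z$ we may take with disjoint closures. Choose a countable $N\prec H(\theta)$ containing $Z$, the elementary sequence, the sequences $\langle U_\alpha:\alpha<\omega_1\rangle$ and $\bigl\langle\orpr{x_\alpha}{y_\alpha}:\alpha<\omega_1\bigr\rangle$, and the objects $A,O,O'$, and arrange that $\delta:=N\cap\omega_1$ is a limit point of $A$ (possible, as the admissible values of $\delta$ and the limit points of $A$ each form a club). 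Then $M_\delta=\bigcup_{\beta<\delta}M_\beta\subseteq N$, so $M_\delta\prec N$, and $A\cap\delta$ is cofinal in $\delta$. The aim is to contradict the defining property of $U_\alpha$ for a well-chosen $\alpha\in A$ close to $\delta$ (or $\alpha=\delta$ if $\delta\in A$): for such an $\alpha$ the $\pr_{M_\alpha}$-fibre $[x_\alpha\restr M_\alpha]$ runs from $x_\alpha\in U_\alpha\cap O$ out to $y_\alpha\in O'$, and one wants to reflect this configuration through $N$ (and through $M_\delta\prec N$) so as to manufacture a nonempty open subset of $Z$ that is coded in $M_\alpha$ and trapped inside $U_\alpha$.

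This last step is the heart of the matter and the one obstacle I expect to be genuinely hard: nothing before it has used that $Z$ is ccc, and it is precisely here, in converting the externally given separating sets $O$ and $O'$ into data that the models in the elementary sequence can reflect, that the ccc of $Z$ must be exploited --- for instance by replacing $O$ and $O'$ by maximal pairwise-disjoint families of basic open sets refining them, which are then countable and reflect into $N$. I anticipate that making this work will force the construction of the $U_\alpha$ and of the pairs in the second paragraph to be carried out with more care than indicated above, precisely so that the reflected information is compelled to collide with the newness of $U_\alpha$; pinning down that mechanism is the real content of the argument.
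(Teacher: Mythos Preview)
Your overall architecture matches the paper's exactly: reduce via Lemma~\ref{piweight} to a compact ccc $Z$ with $\pi w(Z)=\omega_1$, build an elementary $\omega_1$-sequence of pairs tied to the growth of the $\pi$-base, and use ccc to reflect hypothetical separating open sets into the chain. You are also right that the final step is where ccc enters and that it is the crux. But the gap you flag is real, and it cannot be closed with an arbitrary choice of $y_\alpha$: merely knowing $y_\alpha\notin U_\alpha$ and $x_\alpha\restr M_\alpha=y_\alpha\restr M_\alpha$ gives no leverage on how a separating neighbourhood of $y_\alpha$ interacts with neighbourhoods of $x_\alpha$, and your reflection through an auxiliary $N$ has nothing to bite on.

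The paper's missing ingredient is a refined choice of $y_\alpha$. Having fixed $U_\alpha$ and $x_\alpha\in U_\alpha$, note that $\pr_{M_\alpha}[Z\setminus\cl U_\alpha]$ is dense in $Z_{M_\alpha}$ (essentially your saturation argument), so one may take a closed $Z_\alpha\subseteq Z\setminus U_\alpha$ on which $\pr_{M_\alpha}$ is irreducible onto $Z_{M_\alpha}$, with $Z_\alpha\in M_{\alpha+1}$. A finite-cover argument then yields $y_\alpha\in Z_\alpha$ with the property that for \emph{every} neighbourhood $W$ of $y_\alpha$ the open set $\pr_{M_\alpha}^{-1}\bigl[Z_{M_\alpha}\setminus\pr_{M_\alpha}[\cl W]\bigr]$ fails to be dense in any neighbourhood of $x_\alpha$. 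With this choice the endgame is direct and needs no auxiliary $N$: since $\pi w(Z)=\omega_1$, the basic open sets in $M=\bigcup_\alpha M_\alpha$ form a $\pi$-base, so by ccc there is $\delta<\omega_1$ such that the basic open subsets of the separating set $U$ (the one containing the $x_\alpha$'s) that lie in $M_\delta$ are already dense in $U$. For any $\alpha\in A$ with $\alpha>\delta$ each such set is $\pr_{M_\alpha}$-saturated and disjoint from $\cl W$, hence disjoint from $\pr_{M_\alpha}^{-1}\bigl[\pr_{M_\alpha}[\cl W]\bigr]$; their union is therefore an open set, dense in the neighbourhood $U$ of $x_\alpha$, and contained in $\pr_{M_\alpha}^{-1}\bigl[Z_{M_\alpha}\setminus\pr_{M_\alpha}[\cl W]\bigr]$, contradicting the defining property of $y_\alpha$.
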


\begin{proof} 
We proceed by contradiction.
Since a closed subspace of a csD space will also be csD, 
we may apply Lemma~\ref{piweight} and assume that we have a compact
space~$X$ that is csD and ccc, and has $\pi$-weight equal to $\omega_1$.
 
Let $\langle M_\alpha:\alpha\in\omega_1\rangle$ be an elementary sequence
for~$X$ and let $M=\bigcup_{\alpha\in\omega_1}M_\alpha$. 
It follows that the basic open subsets of~$X$ that are members of~$M$ form 
a $\pi$-base for~$X$.
We will define an elementary $\omega_1$-sequence of pairs.

For each $\alpha$ we choose our pair $x_\alpha\neq y_\alpha\in M_{\alpha+1}$ 
so that 
$x_\alpha\restr M_\alpha = y_\alpha\restr M_\alpha$. 
We first choose a non-empty basic open set $U_\alpha$ from $M_{\alpha+1}$ whose 
closure does not contain any non-empty basic open set that is a member 
of~$M_{\alpha}$. 
We may do so since $X$~is assumed to not have a countable $\pi$-base. 
Let $x_\alpha$ be any point in $U_\alpha\cap M_{\alpha+1}$.
Next, we note that $\pr_{M_\alpha}[X\setminus \cl U_\alpha]$ is dense 
in~$X_{M_\alpha}$ so that $\pr_{M_\alpha}[X\setminus U_\alpha]=X_{M_\alpha}$.  
Therefore we can take a closed subset~$Z_\alpha$ of~$X$, disjoint 
from~$U_\alpha$, such that $\pr_{M_\alpha}$ maps $Z_\alpha$ irreducibly 
onto~$X_{M_\alpha}$; 
by elementarity, there is such a~$Z_\alpha$ in~$M_{\alpha+1}$.

\begin{claim} 
There is a point $y_\alpha\in Z_\alpha$ such that for each neighborhood~$W$ 
of~$y_\alpha$, the 
set~$\pr_{M_\alpha}^{-1}\bigl[X_{M_\alpha}\setminus 
                   \pr_{M_\alpha}[\cl{W}]\bigr]$ 
is not dense in a neighborhood of~$x_\alpha$ --- 
in dual form: $x_\alpha$~belongs to the closure of the interior 
of~$\pr_{M_\alpha}^{-1}\bigl[\pr_{M_\alpha}[\cl{W}]\bigr]$ for every 
neighbourhood of~$y_\alpha$.
By elementarity, such a point~$y_\alpha$ can be chosen to be a member  
of~$M_{\alpha+1}$; 
it is immediate that $y_\alpha\neq x_\alpha$ and 
$x_\alpha\restr M_\alpha=y_\alpha \restr M_\alpha$.

\smallskip\noindent\textit{Proof}:
We prove the dual form: if not then there is a cover of~$Z_\alpha$ by basic 
open sets~$W$ for which 
$x_\alpha\notin\cl\Int \pr_{M_\alpha}^{-1}\bigl[\pr_{M_\alpha}[\cl W]\bigr]$.
Take a finite subcover $\{W_1,\ldots, W_n\}$; then 
$X_{M_\alpha}=\bigcup_{i=1}^n\pr_{M_\alpha}[\cl W_i]$ and so
$X=\bigcup_{i=1}^n\pr_{M_\alpha}^{-1}\bigl[\pr_{M_\alpha}[\cl W_i]\bigr]$.
Now the union of the boundaries of these sets is nowhere dense, 
so that in fact 
$X=\bigcup_{i=1}^n\cl\Int\pr_{M_\alpha}^{-1}\bigl[\pr_{M_\alpha}[\cl W_i]\bigr]$,
a contradiction as $x_\alpha$ was assumed not to belong to that union.
\end{claim}

Now we show that the elementary $\omega_1$-sequence of pairs 
$\bigl\langle\orpr{x_\alpha}{y_\alpha} : \alpha\in \omega_1\bigr\rangle$ 
is not $\omega_1$-separated.  
Suppose that $U$ and $W$ are disjoint open subsets of~$X$ that have disjoint 
closures, and that there is an uncountable subset~$A$ of~$\omega_1$ such 
that $x_\alpha\in U$ and $y_\alpha\in W$ for all $\alpha\in A$.  
Let $\mathcal U$ and $\mathcal W$ denote the families of basic
open subsets of $X$ that are contained in $U$ and $W$ respectively.
Since $X$ is ccc and the family of basic open sets that are members of~$M$
forms a $\pi$-base for~$X$ there is a $\delta<\omega_1$ such that the unions 
of~$\mathcal{U}_\delta=M_\delta\cap \mathcal U$ 
and~$\mathcal{W}_\delta=M_\delta\cap \mathcal W$ are dense in~$U$ and~$W$
respectively.  
Note that each member, $O$, of $\mathcal{U}\cup\mathcal{W}$ has its support
in~$M_\delta$ so that it satisfies 
$O=\pr_{M_\alpha}^{-1}\bigl[\pr_{M_\alpha}[O]\bigr]$.

Let $\alpha\in A$ be larger than $\delta$.
By construction each member of $\mathcal U_\delta$ is disjoint from~$\cl W$
and because its support is in~$M_\alpha$ it is also disjoint from
$\pr_{M_\alpha}^{-1}\bigl[\pr_{M_\alpha}[\cl{W}]\bigr]$.
It follows that $\bigcup \mathcal U_\delta$ is contained in 
$\pr_{M_\alpha}^{-1}\bigl[X_{M_\alpha}
 \setminus\pr_{M_\alpha}[\cl{W}]\bigr]$. 
Since $\bigcup \mathcal U_\alpha$ is
dense in~$U$, this contradicts the conditions in Claim 1.
\end{proof}

We draw attention to the fact that it follows from Theorem~\ref{cccpiwt} 
that a csD space will not map onto $[0,1]^{\omega_1}$, and therefore, 
by \v{S}apirovski\u\i's famous result from~\cite{MR580628} that it will have 
(many) points of countable $\pi$-character.

\subsection{Weight $\omega_1$ fibered}

We will see in Theorem~\ref{t=w} that a similar application of elementary 
sequences will imply that a csD space will have a property stronger than 
countable tightness.
This approach was inspired by the Juh\'asz-Szentmikl\'ossy proof 
from~\cite{JuSzCvgt} where it is shown that if a compact space does not
have countable tightness, then it will contain a converging (free)
$\omega_1$-sequence 
(also making essential use of \v{S}apirovski\u\i's result). 
A csD space can not contain a (co-countably) converging $\omega_1$-sequence.
We will need a strengthening of this result.  
A point~$x$ is commonly called \emph{condensation point} of a set~$A$ if 
every neighborhood of~$x$ contains uncountably many points of~$A$.

\begin{proposition}\label{retract} 
Let $A$ be an uncountable subset of a csD space~$X$ whose set of condensation
points is metrizable, then there is a co-countable subset~$B$ of~$A$ 
with a metrizable closure.
\end{proposition}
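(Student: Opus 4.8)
The plan is to reduce everything to showing that $A\setminus C$ is countable, where $C$ denotes the set of condensation points of~$A$; for then $B=A\cap C$ is a co-countable subset of~$A$ and $\overline B\subseteq\overline C=C$ is metrizable, as required. So suppose $A\setminus C$ is uncountable; I will show that~$X$ is not csD, a contradiction. Replacing~$A$ by an uncountable subset of $A\setminus C$ and~$C$ by the set of condensation points of that subset (which is a closed subset of the old~$C$, hence still metrizable, and is nonempty and disjoint from the new~$A$), we may assume that $A\cap C=\emptyset$. The task is then to derive a contradiction from the existence of an uncountable set~$A$ in the csD space~$X$ whose condensation-point set~$C$ is metrizable and disjoint from~$A$.

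Metrizability of~$C$ is used exactly once: fix a countable $S\subseteq\kappa$ for which $\pr_S$ is injective on~$C$, so that $\pr_S$ carries~$C$ homeomorphically onto the compact metrizable set $\pr_S[C]\subseteq[0,1]^S$. I would then show that $A_0=\{a\in A:\pr_S(a)\notin\pr_S[C]\}$ is countable, via two observations. First, for $s\notin\pr_S[C]$ the fibre $A\cap\pr_S^{-1}(s)$ has closure inside the closed set $\pr_S^{-1}(s)$, which misses~$C$; since every condensation point of a subset of~$A$ lies in~$C$, this fibre has no condensation point, hence (compact spaces being Lindel\"of) is countable. Second, a point of $\pr_S[A]$ lying outside $\pr_S[C]$ cannot be a condensation point of $\pr_S[A]$ in the second countable space~$[0,1]^S$: otherwise each of its neighbourhoods pulls back to a neighbourhood containing uncountably many points of~$A$, a condensation point of which lies in~$C$ and has $\pr_S$-image inside the closure of that neighbourhood, forcing the original point into the closed set $\pr_S[C]$. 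Hence $\pr_S[A]\setminus\pr_S[C]$, being a set of non-condensation points in a second countable space, is countable, and $A_0$ is the union of the corresponding countably many countable fibres.

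Discarding $A_0$ --- which leaves~$A$ uncountable, still disjoint from~$C$, and still with condensation-point set~$C$ --- we may assume every $a\in A$ has $\pr_S(a)\in\pr_S[C]$, so by injectivity of $\pr_S$ on~$C$ there is a well-defined point $c(a)\in C$ with $\pr_S\bigl(c(a)\bigr)=\pr_S(a)$, and $c(a)\neq a$ since $a\notin C$. Now consider the uncountable off-diagonal sequence of pairs $\bigl\langle\orpr{a}{c(a)}:a\in A\bigr\rangle$ (re-indexed by~$\omega_1$). I claim it is not $\omega_1$-separated, which by Gruenhage's criterion contradicts $X$ being csD. Let $T\subseteq A$ be uncountable; it has a condensation point $c^*$, which lies in~$C$. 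Given any neighbourhood $N'$ of~$c^*$ in~$C$, choose an open $N\subseteq[0,1]^S$ with $\pr_S(c^*)\in N$ and $N\cap\pr_S[C]=\pr_S[N']$; then $V=\pr_S^{-1}[N]\cap X$ is a neighbourhood of~$c^*$ in~$X$, so $V\cap T$ is uncountable, and every $a\in V\cap T$ has $\pr_S\bigl(c(a)\bigr)=\pr_S(a)\in N\cap\pr_S[C]=\pr_S[N']$, hence $c(a)\in N'$. Thus $c^*$ lies in the closure of $\{c(a):a\in T\}$, and it also lies in the closure of~$T$; the two closures meet, and as $T$ was an arbitrary uncountable subset the sequence of pairs fails to be $\omega_1$-separated.

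The main obstacle is precisely this last point: producing a sequence of pairs that no uncountable subset can pull apart. Choosing a ``close companion'' for each~$a$ among the points of~$X$ is hopeless, since~$X$ need not be metrizable anywhere near~$C$ and there is no way to make~$a$ and its companion genuinely close in~$X$. The resolution is to take the companion inside~$C$ and to exploit that one countable coordinate set~$S$ simultaneously recognises~$C$ faithfully and carries the condensation of~$A$ into~$C$: after the stray points of $A_0$ are removed each~$a$ has a canonical companion $c(a)\in C$ with the same $S$-shadow, so any uncountable $T$ clustering at a point of~$C$ automatically drags its companions to that same point.
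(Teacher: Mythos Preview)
Your proof is correct and follows the same overall strategy as the paper's: reduce to showing $A\setminus C$ is countable, and if not, pair each point of~$A$ with a companion in~$C$ that agrees with it on a countable set of coordinates, then show the resulting sequence of pairs is not $\omega_1$-separated because any condensation point of one side is forced to be a limit of the other.

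The execution differs in a useful way. The paper runs the argument through an elementary chain $\langle M_\alpha:\alpha\in\omega_1\rangle$ with $A,K\in M_0$, choosing at stage~$\alpha$ a pair agreeing on the growing set $M_\alpha\cap\kappa$; the non-separation argument then uses that for any basic open set with support in some~$M_\beta$, all later pairs enter or avoid it together. You instead fix once and for all a single countable $S\subseteq\kappa$ on which $\pr_S$ is injective on~$C$, and work entirely with $S$-cylinders. This is more elementary and self-contained: it shows the proposition does not actually need the chain machinery, only the single fact that a compact metrizable set is separated by countably many coordinates. You also make explicit the counting argument (your set~$A_0$) showing that all but countably many points of~$A$ have their $S$-projection inside $\pr_S[C]$; the paper leaves the analogous step (``we may choose for each~$\alpha$ a pair\dots'') to the reader. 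What the paper's formulation buys is consistency with its theme---the result slots into the framework of elementary $\omega_1$-sequences of pairs used throughout---whereas your version isolates exactly how little is needed for this particular proposition.
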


\begin{proof} 
Let $\langle M_\alpha:\alpha\in\omega_1\rangle$ be an elementary sequence 
for~$X$ such that $A\in M_0$ and put $M=\bigcup_{\alpha<\omega_1}M_\alpha$.
Let $K$ denote the (closed) set of condensation points of~$A$; it is also a 
member of~$M_0$.  
Since $K$ is compact metrizable and a member of~$M_0$, it follows that 
$\pr_{M_0}\restr K$ is one-to-one. 

We prove by contradiction that $A\setminus K$ is countable. 
If $A\setminus K$ is uncountable then we may choose for each~$\alpha$
a pair of points~$\orpr{x_\alpha}{y_\alpha}$ such that
$x_\alpha\in A\setminus K$, 
$y_\alpha\in K$ and $\pr_{M_\alpha}(x_\alpha) = \pr_{M_\alpha}(y_\alpha)$.

Let $J$ be any uncountable subset of~$\omega_1$
and let $y\in K$ be a condensation point of $\{x_\alpha:\alpha\in J\}$.
We show that $y$~also belongs to the closure of~$\{y_\alpha:\alpha\in J\}$.

Let $U$ be a basic open neighbourhood of~$y$ that is in~$M$ and take $\beta$ 
such that the support of~$U$ is contained in~$M_\beta$.
There are uncountably many~$\alpha$ in~$J\setminus\beta$ for 
which $x_\alpha\in U$ and for these~$\alpha$ we have 
$y_\alpha\restr\beta=x_\alpha\restr\beta$ and hence also $y_\alpha\in U$.
It follows that $y\restr M$ is in the closure of 
$\{y_\alpha\restr M:\alpha\in J\}$ and hence, because $\pr_M$ is one-to-one
on~$K$ that $y$~is in the closure of~$\{y_\alpha:\alpha\in J\}$.
\end{proof}

We are now ready to prove our result about weight $\omega_1$ fibered spaces. 
Remember from the introduction that $X$ is weight $\omega_1$ fibered if
there is a continuous map $f:X\to Y$ such that $Y$ and every fiber $f^{-1}(y)$
have weight at most~$\omega_1$.

\begin{theorem} 
Each csD space that is weight $\omega_1$ fibered is metrizable.
\end{theorem}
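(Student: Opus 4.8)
The plan is to run a contradiction argument parallel to the proof of Theorem~\ref{cccpiwt}, using the weight $\omega_1$ fibering map~$f\colon X\to Y$ to produce an elementary $\omega_1$-sequence of pairs that cannot be $\omega_1$-separated. Assume $X$ is csD, weight $\omega_1$ fibered, but not metrizable. Fix an elementary sequence $\langle M_\alpha:\alpha\in\omega_1\rangle$ for~$X$ with $f$ and~$Y$ in~$M_0$, and put $M=\bigcup_{\alpha}M_\alpha$. Since $X$ is not metrizable, by Proposition~2.? it has an elementary $\omega_1$-sequence of pairs; the point is to choose one witnessing failure of $\omega_1$-separation. The fibering lets us do this fiber by fiber: for each~$\alpha$ we want a pair $x_\alpha\neq y_\alpha$ in $M_{\alpha+1}$ with $x_\alpha\restr M_\alpha=y_\alpha\restr M_\alpha$ and, crucially, with $f(x_\alpha)$ and $f(y_\alpha)$ ``close'' in a sense that will propagate through the elementary chain.

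First I would observe that $Y$, being a continuous image of the csD space~$X$ and of weight at most~$\omega_1$, is itself csD (small diagonal passes to continuous images of compacta of weight $\le\omega_1$, or one argues directly that $Y$ has countable tightness and then bounds its weight). The real work is the fibers. Fix~$\alpha$. Inside $M_\alpha$ one has the ``local'' picture $X_{M_\alpha}=\pr_{M_\alpha}[X]$ and the quotient $f$ factors through it in the relevant submodel-sense. Because $X$ is not metrizable, there is some fiber behavior not captured by $M_\alpha$, and I would use this together with the weight-$\omega_1$ bound on the fibers to find a basic open $U_\alpha\in M_{\alpha+1}$ whose closure contains no nonempty basic open set of~$M_\alpha$ meeting the relevant fiber, then pick $x_\alpha\in U_\alpha\cap M_{\alpha+1}$, pass to a closed $Z_\alpha\in M_{\alpha+1}$ mapped irreducibly by $\pr_{M_\alpha}$ onto $X_{M_\alpha}$ and disjoint from $U_\alpha$, and invoke the Claim inside the proof of Theorem~\ref{cccpiwt} verbatim to extract $y_\alpha\in Z_\alpha\cap M_{\alpha+1}$ lying in the closure of the interior of $\pr_{M_\alpha}^{-1}[\pr_{M_\alpha}[\cl W]]$ for every neighbourhood~$W$ of~$y_\alpha$. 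Here one also arranges, using that the fiber through the appropriate point has weight $\le\omega_1$, that $y_\alpha$ can be taken inside a single fiber-neighbourhood controlled at stage~$\alpha$; this is where Proposition~\ref{retract} enters, applied to the uncountable set $\{x_\alpha:\alpha\in J\}$ for an arbitrary uncountable $J\subseteq\omega_1$ together with its condensation points, to force those condensation points to lie in the closure of $\{y_\alpha:\alpha\in J\}$ whenever the condensation set is metrizable.

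The endgame mirrors Theorem~\ref{cccpiwt}. Suppose for contradiction that $U,W$ are open with disjoint closures and $A\subseteq\omega_1$ uncountable with $x_\alpha\in U$, $y_\alpha\in W$ for $\alpha\in A$. Using countable tightness of~$X$ (a known property of csD spaces) and the fact that the basic open members of~$M$ form a $\pi$-base, fix $\delta$ so that the $M_\delta$-members of the relevant families are dense in the pieces of $U$ and $W$ that matter, and so that the supports involved lie in~$M_\delta$; then for $\alpha\in A$ above~$\delta$ the density of $M_\delta$-basic sets below~$U$ near~$x_\alpha$, combined with their disjointness from $\pr_{M_\alpha}^{-1}[\pr_{M_\alpha}[\cl W]]$, contradicts the defining property of~$y_\alpha$ from the Claim. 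Hence the sequence of pairs is not $\omega_1$-separated, contradicting that $X$ is csD; therefore $X$ is metrizable.

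The main obstacle I expect is the bookkeeping that makes the ``fiber of weight $\le\omega_1$'' hypothesis actually do something: a priori the Claim-point $y_\alpha$ could wander into a different fiber than $x_\alpha$, and one must use Proposition~\ref{retract} (applied to the $x_\alpha$'s and to the fiber structure) to pin condensation points of uncountable subsequences of $\{x_\alpha\}$ into metrizable sets and hence into closures of corresponding $\{y_\alpha\}$'s, which is precisely what feeds the final contradiction. Organizing the two levels of approximation — the ambient $\pi$-base coming from~$M$ and the weight-$\omega_1$ structure inside each fiber $f^{-1}(f(x_\alpha))$ — so that a single~$\delta$ works simultaneously is the delicate point; everything else is a routine adaptation of the proof of Theorem~\ref{cccpiwt} and of Proposition~\ref{retract}.
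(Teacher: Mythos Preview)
Your proposal has a genuine gap: it tries to transplant the proof of Theorem~\ref{cccpiwt} into a setting where its two load-bearing hypotheses are absent. In that proof the choice of~$\delta$ in the endgame works because (i)~$X$ is ccc and (ii)~$X$ has $\pi$-weight~$\omega_1$, so that the basic open sets in~$M$ form a $\pi$-base and a \emph{countable} subfamily from some~$M_\delta$ is dense below each of~$U$ and~$W$. You assert ``the basic open members of~$M$ form a $\pi$-base'' in the weight~$\omega_1$ fibered case, but this is not justified and in general false: nothing in the hypotheses bounds the $\pi$-weight of~$X$ by~$\omega_1$, and without ccc there is no reason a single~$\delta$ captures dense subfamilies. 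The Claim-style construction of~$y_\alpha$ and the use of Proposition~\ref{retract} are both left at the level of intention; in particular, your stated consequence of Proposition~\ref{retract} (that condensation points of $\{x_\alpha:\alpha\in J\}$ lie in the closure of $\{y_\alpha:\alpha\in J\}$) is not what that proposition says, and you have not explained why the set of condensation points would be metrizable.

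The paper's proof follows a quite different route. It first invokes Hu\v{s}ek's theorem (a csD space of weight at most~$\omega_1$ is metrizable) twice: once to reduce to the case where the weight of~$X$ exceeds~$\omega_1$, so that some fiber $[x\restr M]$ is not a singleton, and once to reduce to the case where every such fiber is metrizable (since each has weight at most~$\omega_1$ by the fibering hypothesis reflected in~$M_0$). It then splits on whether the metrizable non-singleton $[x\restr M]$ is a $G_\delta$-set. If not, Lemma~\ref{2} produces an uncountable set whose condensation points lie in the metrizable~$[x\restr M]$, and Proposition~\ref{retract} yields the contradiction. If so, Lemma~\ref{3} (essentially Gruenhage's metrizably-fibered argument run inside the elementary chain) produces an elementary $\omega_1$-sequence of pairs that is not $\omega_1$-separated. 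The key idea you are missing is this reduction via Hu\v{s}ek's theorem to metrizable fibers~$[x\restr M]$, after which no analogue of the ccc/$\pi$-base machinery is needed.
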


\begin{proof}
Let $X$ be a compact space that is weight $\omega_1$ fibered but not metrizable.
We will prove that $X$~is not csD.  
Let $\langle M_\alpha:\alpha\in \omega_1\rangle$ be an elementary sequence 
for~$X$.
As usual we are assuming that $X$ is a subspace of $[0,1]^\kappa$ for some
cardinal~$\kappa\in M_0$.  
Let $M$ denote the union $\bigcup_{\alpha\in \omega_1}M_\alpha$ and we
recall that $[x\restr M] $ denotes the set 
$\{y\in X:y\restr M=x\restr M\}$.
Since $X$ is weight~$\omega_1$ fibered, 
this will be \emph{witnessed} by the elementary submodel~$M_0$; thus it is 
routine to verify that, for each $x\in X$, the set $[x\restr M]$ has 
weight at most~$\omega_1$. 
By Hu\v{s}ek's result, we may assume that each set~$[x\restr M]$ is metrizable.
On the other hand, and also by Hu\v sek's theorem, we may also assume that the 
weight of~$X$ is greater than~$\omega_1$, so we may fix an $x\in X$ such that 
$[x\restr M]\neq \{x\}$.  
We complete the proof by establishing two lemmas of independent interest.  
If $[x\restr M]$ is not a $G_\delta$-set, then Lemma~\ref{2} will
complete the proof. 
On the other hand if $[x\restr M]$ is a $G_\delta$-set, then there will 
be some $\delta< \omega_1$ such that 
$[x\restr  M] = [x\restr M_\delta]$ is metrizable and 
not a singleton. 
In this case Lemma~\ref{3} will complete the proof.
\end{proof}

\begin{lemma}\label{2} 
If $X$ is a space for which there is an elementary chain 
$\langle M_\alpha:\alpha\in\omega_1\rangle$ for~$X$ and a point~$x$ in~$X$
such that $[x\restr M]$ is metrizable but not a $G_\delta$-set,
then $X$~is not~csD.
\end{lemma}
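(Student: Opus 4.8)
The plan is to construct, from the hypotheses, an elementary $\omega_1$-sequence of pairs that fails to be $\omega_1$-separated, and then invoke the second Proposition. The starting data is a point $x$ with $[x\restr M]$ metrizable but not $G_\delta$. The first observation I would make is that because $[x\restr M]=\bigcap_{\alpha<\omega_1}[x\restr M_\alpha]$ is a decreasing intersection of $\omega_1$-many closed sets that is \emph{not} equal to any of its initial segments (that would make it a $G_\delta$, indeed even closed-and-$G_\delta$ in a compact space), we get a strictly decreasing (modulo a club) sequence: for club-many $\alpha$ there is a point $z_\alpha\in[x\restr M_\alpha]\setminus[x\restr M_{\alpha+1}]$, and by elementarity $z_\alpha$ can be found in $M_{\alpha+1}$. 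I would then set $x_\alpha=x\restr$-representative chosen in $M_{\alpha+1}$ and $y_\alpha=z_\alpha$; these satisfy $x_\alpha\restr M_\alpha=z_\alpha\restr M_\alpha=x\restr M_\alpha$, and $x_\alpha\ne y_\alpha$ since $z_\alpha\notin[x\restr M_{\alpha+1}]\ni x_\alpha$ (care is needed to pick $x_\alpha$ so this holds — one can take $x_\alpha$ to be any element of $X\cap M_{\alpha+1}$ in $[x\restr M_\alpha]$, or even the canonically chosen $x$ if $x\in M_{\alpha+1}$, which one can arrange for club-many $\alpha$). Reindexing along the club gives a genuine elementary $\omega_1$-sequence of pairs.

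Next I would argue this sequence is not $\omega_1$-separated. Suppose toward contradiction that $U,W$ are open sets with disjoint closures and $A\subseteq\omega_1$ uncountable with $x_\alpha\in U$, $y_\alpha\in W$ for $\alpha\in A$. The key is that $x_\alpha$ all lie in $[x\restr M_\alpha]$ while $y_\alpha=z_\alpha$ also lies in $[x\restr M_\alpha]$, and as $\alpha$ ranges over $A$ these fibers shrink toward the single metrizable fiber $[x\restr M]$. So I would pass to a limit: fix $\delta$ large enough that $\cl U$ and $\cl W$ both have support inside $M_\delta$ (using ccc-type or just finiteness-of-support arguments — here one uses that $\cl U,\cl W$ being disjoint compact sets are separated by a basic clopen-ish partition with finite support, and elementarity pushes that support into some $M_\delta\cap\kappa$). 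Then for $\alpha>\delta$ in $A$, membership of $x_\alpha$ in $U$ depends only on $x_\alpha\restr M_\delta=x\restr M_\delta$, and likewise $y_\alpha\in W$ depends only on $y_\alpha\restr M_\delta=x\restr M_\delta$. But $x\restr M_\delta$ cannot lie in $\pr_{M_\delta}[U]$ and in $\pr_{M_\delta}[W]$ simultaneously when $\cl U\cap\cl W=\emptyset$ and both supports are in $M_\delta$ — contradiction. Hence the sequence is not $\omega_1$-separated and $X$ is not csD.

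The main obstacle I anticipate is the bookkeeping around supports and the precise sense in which ``$[x\restr M]$ not $G_\delta$'' forces the fibers $[x\restr M_\alpha]$ to shrink properly cofinally often. In a compact space, a closed set that is $G_\delta$ equals the intersection of countably many of its neighborhoods; here $[x\restr M]$ is closed and is the intersection of the decreasing closed family $\{[x\restr M_\alpha]:\alpha<\omega_1\}$, so if it failed to be $G_\delta$ one must check that it is not already stabilized, i.e.\ that $\{\alpha:[x\restr M_\alpha]\supsetneq[x\restr M]\}$ is cofinal (in fact co-countable would suffice). I would prove: if $[x\restr M_\delta]=[x\restr M]$ for some $\delta$, then since $[x\restr M_\delta]$ is a fiber of the continuous map $\pr_{M_\delta}$ with second-countable (as $M_\delta$ is countable) range, it is a closed $G_\delta$, contradicting the hypothesis. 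So cofinality is automatic, and we even get co-countability after passing to a tail. A secondary subtlety is ensuring $x_\alpha\ne y_\alpha$ uniformly; choosing $y_\alpha\in[x\restr M_\alpha]\setminus[x\restr M_{\alpha+1}]$ and $x_\alpha\in[x\restr M_{\alpha+1}]$ (both in $M_{\alpha+1}$, the former by elementarity applied to the nonempty definable set, the latter since $[x\restr M_{\alpha+1}]$ is nonempty and definable from parameters in $M_{\alpha+1}$) makes this transparent, while still guaranteeing $x_\alpha\restr M_\alpha=y_\alpha\restr M_\alpha$.
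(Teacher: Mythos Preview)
Your non-$\omega_1$-separation argument has a genuine gap. You claim that once $\cl U$ and $\cl W$ are disjoint compact sets, they are separated by basic open sets with a \emph{finite} support~$F\subseteq\kappa$, and that ``elementarity pushes that support into some $M_\delta\cap\kappa$''. The first part is fine, but the second is simply false: elementarity only lets you find such an~$F$ inside~$M_\delta$ when the objects being separated are themselves parameters in~$M_\delta$. Here $U$ and~$W$ are \emph{arbitrary} open sets, not in any~$M_\alpha$, and the finite set~$F$ you obtain has no reason whatsoever to lie in $M=\bigcup_\alpha M_\alpha$ (which has size~$\omega_1$, while $\kappa$ may be much larger). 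So there is no~$\delta$ making membership in~$U$ depend only on the $M_\delta$-projection, and your contradiction evaporates. Concretely, nothing prevents the $x_\alpha$'s from accumulating at one point of the metrizable fiber $[x\restr M]$ while the $y_\alpha$'s accumulate at another; basic open sets with support in~$M$ cannot distinguish points of $[x\restr M]$ at all, since $\pr_M$ is constant there.

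There is also a smaller bookkeeping problem: to choose $y_\alpha\in[x\restr M_\alpha]\setminus[x\restr M_{\alpha+1}]$ \emph{inside} $M_{\alpha+1}$ by elementarity, you would need the parameter $x\restr M_\alpha$ (or $x$ itself) to be in~$M_{\alpha+1}$, which is not given. This is not fatal to the overall strategy --- to show $X$ is not csD you only need \emph{some} sequence of pairs that is not $\omega_1$-separated, not an elementary one --- but it does mean your appeal to the second Proposition is misplaced.

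The paper's proof avoids both issues by not attempting a direct separation argument along the given chain. It simply chooses one point $x_\alpha\in[x\restr M_\alpha]\setminus[x\restr M]$ for each~$\alpha$, notes that the resulting uncountable set has all its condensation points inside the metrizable set~$[x\restr M]$, and then invokes Proposition~\ref{retract}. That proposition builds a \emph{fresh} elementary chain containing the set $A=\{x_\alpha:\alpha<\omega_1\}$ as a parameter, and shows (assuming csD) that $A$ minus its set of condensation points is countable --- contradicting the fact that every $x_\alpha$ was chosen outside~$[x\restr M]$. The switch to a new chain is exactly what makes the elementarity argument go through.
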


\begin{proof}
Since we are assuming that $[x\restr M]$ is not a $G_\delta$-set there 
is no $\delta\in \omega_1$ such that $[x\restr M]=[x \restr M_\delta]$.  
For each $\alpha\in \omega_1$ choose  
$x_\alpha \in [x\restr M_\alpha]\setminus [x\restr M]$.
For each $\alpha$, there is a $\beta_\alpha$ such that 
$x_\alpha\notin [x\restr M_{\beta_\alpha}]$, 
hence the set $A=\{x_\alpha:\alpha\in\omega_1\}$ is uncountable. 
The set of condensation points of~$A$ is contained in~$[x\restr M]$; 
and so, by Proposition~\ref{retract}, $X$~is not csD.
\end{proof}

\begin{lemma}\label{3} 
If for some $x\in X$, there is an elementary chain 
$\langle M_\alpha:\alpha\in\omega_1\rangle$ such that $[x\restr M]$ is 
metrizable but not a singleton, then $X$ is not csD.
\end{lemma}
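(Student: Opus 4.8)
The plan is to reduce to the situation where $[x\restr M]$ is a $G_\delta$-set, replace $X$ by a compact \emph{metric} quotient, show that this quotient has uncountably many nonsingleton metrizable fibres, and then harvest from those fibres an uncountable family of pairs that is not $\omega_1$-separated, so that $X$ fails Gruenhage's criterion. First I would dispose of the easy half: if $[x\restr M]$ is not a $G_\delta$-set then $X$ is not csD by Lemma~\ref{2}. So assume $[x\restr M]$ is a $G_\delta$-set; then, by a routine compactness argument (already used in the proof of the preceding theorem), $[x\restr M]=[x\restr M_\delta]$ for some $\delta<\omega_1$. Write $F=[x\restr M_\delta]$, a compact metrizable space with at least two points. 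Note in passing that $X$ itself is not metrizable: otherwise $M_0$, which contains $X$ and $\kappa=w(X)$, would contain a countable $C\subseteq\kappa$ with $\pr_C$ one-to-one on $X$, and then $C\subseteq M_0\subseteq M$ would force $[x\restr M]=\{x\}$.

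Next, consider $\phi=\pr_{M_\delta}\colon X\to X_{M_\delta}$. Since $M_\delta$ is countable, $X_{M_\delta}$ is compact metrizable, $\phi$ is a closed map, and $F=\phi^{-1}\bigl(\phi(x)\bigr)$. Let $D$ be the set of those $p\in X_{M_\delta}$ for which $\phi^{-1}(p)$ is metrizable and has more than one point; then $\phi(x)\in D$, and $D$ is definable from $\phi$ and $X$, so $D\in M_{\delta+1}$. I claim $D$ is uncountable. If it were countable then $D\subseteq M_{\delta+1}$, so $\phi(x)\in M_{\delta+1}$ and hence $F=\phi^{-1}(\phi(x))\in M_{\delta+1}$; being a compact metrizable space with at least two points, $F$ would then contain distinct $u,v\in M_{\delta+1}$, and --- applying elementarity to the definable nonempty set $\{\xi\in\kappa:u(\xi)\neq v(\xi)\}$ --- there would be a coordinate $\xi\in M_{\delta+1}\cap\kappa\subseteq M$ with $u(\xi)\neq v(\xi)$, contradicting that $u,v\in F=[x\restr M]$ both satisfy $u\restr M=x\restr M=v\restr M$.

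The target is now to build an elementary $\omega_1$-sequence of pairs $\bigl\langle\orpr{x_\alpha}{y_\alpha}:\alpha\in\omega_1\bigr\rangle$ for $X$, with respect to our chain, in which every difference set $E_\alpha:=\{\xi:x_\alpha(\xi)\neq y_\alpha(\xi)\}$ is countable. This already finishes the argument: $E_\alpha$ is definable from the pair, so $E_\alpha\in M_{\alpha+1}$, and being countable it lies inside $M_{\alpha+1}\cap\kappa$; combined with $E_\alpha\subseteq\kappa\setminus M_\alpha$ (since $x_\alpha\restr M_\alpha=y_\alpha\restr M_\alpha$) this gives $E_\alpha\subseteq(M_{\alpha+1}\setminus M_\alpha)\cap\kappa$, so the sets $E_\alpha$ are pairwise disjoint. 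Hence each coordinate, and therefore each basic open subset of $X$ (which has finite support), ``splits'' only finitely many of the pairs; so for every uncountable $A\subseteq\omega_1$ any condensation point of $\{x_\alpha:\alpha\in A\}$ --- one exists, by compactness --- is also a condensation point of $\{y_\alpha:\alpha\in A\}$, and the closures of the two halves meet. Thus the sequence is not $\omega_1$-separated, and by Gruenhage's criterion $X$ is not csD.

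The hard part is the recursion producing such pairs. At stage $\alpha$ one needs $x_\alpha\neq y_\alpha$ in $M_{\alpha+1}\cap X$ lying in a common fibre of $\pr_{M_\alpha}$ and differing on only countably many coordinates; by elementarity it suffices to know such a pair exists in $X$, i.e.\ that $\pr_{M_\alpha}$ has a nonsingleton fibre containing two points of countable ``separation support''. Here the hypothesis must be used non-superficially: the fibre $F$ itself could consist of just two points differing on an \emph{uncountable} set of coordinates, so one cannot simply take $x$; instead one exploits that $D$ is uncountable --- so that $\pr_{M_\delta}$, and the finer maps $\pr_{M_\alpha}$, have a large supply of metrizable nonsingleton fibres --- and, keeping track of the coordinates spent at earlier stages, extracts a suitable countably-supported pair at stage $\alpha$. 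Carrying out this bookkeeping, in particular verifying that at each stage some nonsingleton fibre of $\pr_{M_\alpha}$ still contains a pair of points agreeing on all coordinates used so far, is where the real work of the proof lies.
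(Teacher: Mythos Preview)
Your reduction via Lemma~\ref{2} to the case $[x\restr M]=[x\restr M_\delta]$ is the same as the paper's, and your observation that the set $D$ of points in $X_{M_\delta}$ with metrizable nonsingleton fibre is uncountable is correct and nicely argued. The conditional part of your argument is also sound: \emph{if} at each stage $\alpha$ one can find $x_\alpha\neq y_\alpha$ in the same $M_\alpha$-fibre with $E_\alpha=\{\xi:x_\alpha(\xi)\neq y_\alpha(\xi)\}$ countable, then indeed $E_\alpha\subseteq (M_{\alpha+1}\setminus M_\alpha)\cap\kappa$, the $E_\alpha$'s are pairwise disjoint, every basic open set splits only finitely many pairs, and the sequence is not $\omega_1$-separated.

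The gap is exactly where you flag it, and it is a real one: nothing in the hypothesis guarantees the existence of such countably-supported pairs. Metrizability of a fibre~$F$ tells you there is a countable $C\subseteq\kappa$ with $\pr_C$ one-to-one on~$F$, but it says nothing about the size of $\{\xi:u(\xi)\neq v(\xi)\}$ for $u,v\in F$; two points of a two-point fibre can disagree on an uncountable set of coordinates. Having uncountably many metrizable nonsingleton fibres (your set~$D$) does not help with this, and the ``bookkeeping'' you allude to has no visible mechanism for manufacturing countable difference support. As stated, the recursion cannot be carried out.

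The paper avoids this difficulty by asking for a weaker, but sufficient, property. From $[x\restr M_\alpha]=[x\restr M_\delta]$ for all $\alpha\ge\delta$, one reflects the statement ``$(\exists x)\,[x\restr M_\alpha]=[x\restr M_\delta]$ is metrizable and not a singleton'' into $M_{\alpha+1}$ to obtain $x_\alpha\in M_{\alpha+1}$ with that property, and then picks $y_\alpha\in[x_\alpha\restr M_\alpha]\cap M_{\alpha+1}$ with $y_\alpha\neq x_\alpha$. The point is not that $E_\alpha$ is countable --- it need not be --- but that the compact metrizable set $[x_\alpha\restr M_\delta]$ lies in $M_{\alpha+1}$, so the basic open sets in $M_{\alpha+1}$ contain a base for it. Given an uncountable $J$, one finds a condensation point $z$ of $\{x_\beta:\beta\in J\}$ inside some $[x_\alpha\restr M_\delta]$; then neighbourhoods of $z$ from $M_{\alpha+1}$ suffice, and for any such neighbourhood~$U$ and any $\beta>\alpha$ one has $x_\beta\in U\iff y_\beta\in U$ simply because the support of~$U$ lies in $M_{\alpha+1}\subseteq M_\beta$. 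This replaces your ``disjoint $E_\alpha$'' mechanism with ``the condensation point has a local base coded below stage~$\alpha$'', and that is what makes the transfer of condensation points go through without any control on difference supports.
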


\begin{proof}
Let $\langle M_\alpha:\alpha\in\omega_1\rangle$ be an elementary chain and 
suppose that $[x\restr M]$~is metrizable but not a singleton.
By Lemma~\ref{2}, we may assume that $[x\restr M]$ is a $G_\delta$-set.  
That is, we may assume that there is a $\delta\in\omega_1$ such that 
$[x\restr M]=[x\restr M_\delta]$.
So $[x\restr M_\delta]$ is metrizable and equal to $[x\restr M_\alpha]$
for all~$\alpha\ge\delta$.
We apply  elementarity to this statement to choose an elementary 
$\omega_1$-sequence that will not be $\omega_1$-separated.  
For $\alpha\geq \delta$, we make the observation that $M_{\alpha+1}$ is
a model of the statement
$$
(\exists x\in X) 
\bigl([x\restr M_\alpha ]=[x\restr M_\delta] 
  \text{ is  metrizable and not equal to } \{x\}\bigr).
$$
By elementarity, we can take such a point $x_\alpha \in X\cap M_{\alpha+1}$ and 
take $y_\alpha\in [x_\alpha \restr M_\alpha]\cap M_{\alpha+1}$ witnessing that
$[x_\alpha\restr M_\alpha]\neq \{ x_\alpha\}$.  
Since $[x_\alpha\restr M_\alpha]$ is a compact metrizable set which is a 
member of~$M_{\alpha+1}$, the basic open sets in~$M_{\alpha+1}$ will contain 
a base for it.

The remainder of the proof follows that of Gruenhage for the metrizably
fibered case (see~\cite{GaryDiag}), because the subspace
$\bigcup\bigl\{[x_\alpha\restr M_\delta]:
     \alpha\in\omega_1\setminus\delta\bigr\}$ 
is metrizably fibered over~$X_\delta$. 
Indeed, it follows from the construction that
$x_\alpha \restr M_\delta\neq x_\beta\restr M_\delta$ 
for $\delta<\alpha<\beta$.  
Let $J$ be any uncountable subset of~$\omega_1$.  
Working in the space $X_\delta$, there is an $\alpha\in J$ such that 
$\pr_{M_\delta}(x_\alpha)$ is a condensation point of the
set $\{\pr_{M_\delta}(x_\beta) : \beta\in J\}$. 
Since $\pr_{M_\delta}$ is a closed map and $[x_\alpha\restr M_\delta]$~is 
compact, there is a point $z\in [x_\alpha\restr M_\delta]$
that is a condensation point of $\{x_\beta:\beta \in J\setminus \alpha\}$. 
Although $z$ itself may not be a member of $M_{\alpha+1}$, the basic open sets
from $M_{\alpha+1}$ contain a local base at~$z$. 
In addition, for each basic open neighborhood~$U$ of~$z$ from~$M_{\alpha+1}$ 
and for each $\beta \in J\setminus\alpha+1$ we have 
$x_\beta\in U$ if and only if $y_\beta\in U$. 
It therefore follows that $z$ is also a condensation point of 
$\{ y_\beta:\beta\in J\}$. 
This shows that $\bigl<\orpr{x_\beta}{y_\beta}:\beta\in\omega_1\bigr>$ is 
not $\omega_1$-separated; and completes the proof that $X$~is not~csD.
\end{proof}

\subsection{Luzin sets}

A set of reals is a Luzin set if every uncountable subset is dense in some
interval.
Luzin sets exist if the Continuum Hypothesis holds but Martin's Axiom plus
the negation of Continuum Hypothesis implies they do not exist.
The existence of Luzin sets has some influence on the structure of csD spaces.

\begin{theorem} 
If there is a Luzin set, then every csD space contains points that 
are $G_\delta$-sets.
\end{theorem}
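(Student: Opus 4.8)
The plan is to argue by contradiction. Suppose $X$ is csD, that no point of $X$ is a $G_\delta$-set, and fix a Luzin set; by passing to an uncountable subset we may take it to be $L=\{r_\alpha:\alpha\in\omega_1\}$, which is still Luzin because every uncountable subset of it is an uncountable subset of the original Luzin set and hence dense in some interval. Since $X$ has no isolated points it admits a continuous surjection $h$ onto $[0,1]$; fix one. The key consequence of the two earlier lemmas is the following: for \emph{any} elementary sequence $\langle M_\alpha:\alpha\in\omega_1\rangle$ for $X$, with $M=\bigcup_\alpha M_\alpha$, no fibre $[x\restr M]$ can be metrizable — otherwise Lemma~\ref{3} (applicable because $X$ is csD) would force $[x\restr M]=\{x\}$ and then Lemma~\ref{2} would force $\{x\}$ to be a $G_\delta$-set, against our assumption. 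In particular each $[x\restr M_\alpha]\supseteq[x\restr M]$ is a non-metrizable, hence uncountable, closed (so again csD) subspace of $X$, and by the corollary to Theorem~\ref{cccpiwt} each such fibre has points of countable $\pi$-character.

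I would then build, by recursion on $\alpha\in\omega_1$, an elementary sequence $\langle M_\alpha:\alpha\in\omega_1\rangle$ for $X$ with $L,h\in M_0$, together with an elementary $\omega_1$-sequence of pairs $\bigl\langle\orpr{x_\alpha}{y_\alpha}:\alpha\in\omega_1\bigr\rangle$, using the real $r_\alpha$ to steer the $\alpha$-th pair. At stage $\alpha$, working inside a fibre $[z\restr M_\alpha]$ — non-metrizable for every $z$ by the previous paragraph — and with the help of a point of countable $\pi$-character of that fibre, I would select $x_\alpha\neq y_\alpha$ in $M_{\alpha+1}$ with $x_\alpha\restr M_\alpha=y_\alpha\restr M_\alpha$, arranging (i) that $x_\alpha$ and $y_\alpha$ differ only on coordinates from $M_{\alpha+1}\setminus M_\alpha$, and (ii) that $h(x_\alpha)$ and $h(y_\alpha)$ both lie very close to $r_\alpha$. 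The object of this bookkeeping is to make the resulting sequence of pairs \emph{not} $\omega_1$-separated: given an uncountable $A\subseteq\omega_1$, the set $\{r_\alpha:\alpha\in A\}$ is dense in some interval $I$, so $h$ carries each of $\overline{\{x_\alpha:\alpha\in A\}}$ and $\overline{\{y_\alpha:\alpha\in A\}}$ onto a set containing $I$; and the control (i) on where successive pairs differ is meant to guarantee that a genuine common accumulation point of $\{x_\alpha:\alpha\in A\}$ and $\{y_\alpha:\alpha\in A\}$ lives in a single fibre $h^{-1}(r)$ with $r\in I$. By Gruenhage's criterion — the Proposition following the definition of an elementary $\omega_1$-sequence of pairs — a csD space has no such sequence, and this is the desired contradiction.

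The reduction in the first paragraph and the verification that the objects built in the recursion genuinely form an elementary sequence and an elementary $\omega_1$-sequence of pairs are routine. I expect the main obstacle to be the localisation inside the recursion: one must extract an honest pair from each non-metrizable fibre while keeping the sets of coordinates on which consecutive pairs disagree small and, across stages, almost disjoint, so that for \emph{every} uncountable $A$ the closures $\overline{\{x_\alpha:\alpha\in A\}}$ and $\overline{\{y_\alpha:\alpha\in A\}}$ actually meet \emph{in $X$}, and not merely in the metrizable image $h[X]$. It is precisely here that the Luzin set has to be used in full strength — the fact that every uncountable subset is somewhere dense, not just that $L$ is non-meager — so that no choice of $A$ can pull the two closures apart; playing the room afforded by non-metrizability of the fibres against this density property is the crux of the argument.
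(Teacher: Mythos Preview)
Your plan is considerably more elaborate than the paper's, and the step you flag as ``the crux'' is a real gap that I do not see how to close along these lines. The density of $\{r_\alpha:\alpha\in A\}$ in an interval $I$ only tells you that $h$ carries both closures onto sets containing $I$; it says nothing about the closures meeting in $X$. Your control~(i) --- that $x_\alpha$ and $y_\alpha$ disagree only on coordinates in $M_{\alpha+1}\setminus M_\alpha$ --- gives an almost-disjoint family of difference supports, but there is no mechanism here forcing $\overline{\{x_\alpha:\alpha\in A\}}$ and $\overline{\{y_\alpha:\alpha\in A\}}$ to meet: the points $x_\alpha$ for different $\alpha$ can wander freely on the coordinates in $M_\alpha$, and nothing ties the two clouds together beyond their one-dimensional shadow under $h$. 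An elementary chain and a map onto $[0,1]$ are too coarse to lift density in the image to intersection in $X$.

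The paper's argument avoids this entirely by working with a \emph{single} countable $M\ni X$ and placing the Luzin set directly inside the metrizable quotient $\pr_M[X]$ (which, having no isolated points, contains a dense copy of the irrationals). If no point is $G_\delta$ then every fibre $\pr_M^{-1}(z)$ has at least two points; pick $x_z\neq y_z$ there for each $z\in L$. For uncountable $A\subseteq L$, the Luzin property makes $A$ dense in $\pr_M[W]$ for some basic open $W\in M$. Now the $\pi$-character consequence of Theorem~\ref{cccpiwt} is applied \emph{in $X$}, not in a fibre: take $x\in W\cap M$ with a countable local $\pi$-base $\mathcal B\subseteq M$, all members contained in $W$. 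Each $B\in\mathcal B$ has support in $M$, hence $B=\pr_M^{-1}\bigl[\pr_M[B]\bigr]$; density of $A$ in $\pr_M[W]$ then puts some $z\in A$ in $\pr_M[B]$, so $\{x_z,y_z\}\subseteq B$. Thus $x$ lies in both closures. The two decisive moves you are missing are: put the Luzin set in the elementary quotient rather than routing through an auxiliary $h$, and use countable $\pi$-character at a point of $X$ (in $M$) to manufacture the common accumulation point.
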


\begin{proof} 
Let $X$ be a csD space. 
If $X$ has any isolated points then there is nothing to prove, so assume that
it does not. 
Let $M$ be a countable elementary substructure of~$H(\theta)$ that 
contains~$X$.
Then $\pr_{M}[X]$ is a compact metrizable space with no isolated points.  
If there is any point $x\in X$ such that $[x\restr M] = \{x\}$ then we are
done as well.  
Let $Y$ be a dense subset of $\pr_M[X]$ that is homeomorphic
to the space of irrational numbers, and let $L\subset Y$ be a dense Luzin set 
of cardinality $\omega_1$.  
For each $z\in L$ choose distinct $x_z,y_z\in X$ so that 
$\pr_M(x_z)=\pr_M(y_z) = z$. 
We show that $\bigl<\orpr{x_z}{y_z}:z\in L\bigr>$ is not $\omega_1$-separated.  
Let $A$ be an uncountable subset of~$L$; we show that the closures
of $\{x_z : z\in A\}$ and $\{y_z : z\in A\}$ intersect.
 
Since $L$ is Luzin, there is a basic open set $W$ in $M$ such that $A$ 
contains a dense subset of $\pr_M[W]$. 
Since $X$ has a dense set of points of countable $\pi$-character we may choose
$x\in W\cap M$ so that it has a countable local $\pi$-base $\mathcal B$ 
consisting of basic open sets from~$M$ that are contained in~$W$.
However, since each member of~$\mathcal{B}$ belongs to~$M$ and $A\cap W$~is
dense in~$W$ each member of~$\mathcal{B}$ contains~$x_z$ and~$y_z$ for
some~$z$; this implies that $x$~is in the closure
of both $\{x_z : z\in A\}$ and $\{y_z : z\in A\}$.
\end{proof}

\subsection{Local $\pi$-bases and nets}

A family $\mathcal F$ of nonempty closed sets is a local $\pi$-net at a 
point~$x$ if every neighborhood of~$x$ contains a member of~$\mathcal F$. 
If $\mathcal F$ is a countable family of $G_\delta$-sets and is a local
$\pi$-net at $x$, then $x$ has a countable local $\pi$-base, provided
the ambient space is compact.

\begin{theorem}\label{t=w} 
Let $K$ be a compact $G_{\omega_1}$-set in a csD space $X$. 
Then each countable local $\pi$-base in $K$ expands to a countable local
$\pi$-net in $X$ consisting of $G_\delta$-sets.
\end{theorem}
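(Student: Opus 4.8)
The plan is to reduce to the case where $K$ is a $G_\delta$-set of $X$ and then build the required net by hand; the csD hypothesis is spent entirely on the reduction, through Gruenhage's criterion. For bookkeeping I take $X\subseteq[0,1]^\kappa$ with $\kappa=w(X)$ and let $p\in K$ carry the given countable local $\pi$-base $\mathcal B$. Using compactness and regularity of $K$, I would first replace $\mathcal B$ by a local $\pi$-base $\{V_n\cap K:n\in\omega\}$ at $p$ in $K$ in which each $V_n$ is the trace on $X$ of a basic open box of $[0,1]^\kappa$ with $V_n\cap K\neq\emptyset$, chosen so that the zero-set $\widehat V_n$ — the intersection with $X$ of the closed box — satisfies $\widehat V_n\cap K\subseteq B$ for some $B\in\mathcal B$; then $\{\widehat V_n\cap K:n\in\omega\}$ is a local $\pi$-net at $p$ in $K$. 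Since $X$ is normal and $K$ is a compact $G_{\omega_1}$-set, I also write $K=\bigcap_{\alpha<\omega_1}Z_\alpha$ with $\langle Z_\alpha:\alpha<\omega_1\rangle$ a decreasing chain of zero-sets of $X$, using that in a compact Hausdorff space the closed $G_\delta$-sets are exactly the zero-sets and are closed under countable intersections, so that $Z_{<\gamma}:=\bigcap_{\alpha<\gamma}Z_\alpha$ is a zero-set for every $\gamma<\omega_1$.

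Suppose first that $K$ is a $G_\delta$-set. Then $K$ is a zero-set, so each $\widehat V_n\cap K$ is an intersection of two zero-sets and hence a $G_\delta$-set of $X$, and $\{\widehat V_n\cap K:n\in\omega\}$ is the required countable local $\pi$-net at $p$ in $X$ made of $G_\delta$-sets: it expands $\mathcal B$ (each member lies between the nonempty trace $V_n\cap K$ and a member of $\mathcal B$), and every neighbourhood $U$ of $p$ in $X$ contains some $\widehat V_n\cap K$, because $U\cap K$ is a neighbourhood of $p$ in $K$ and $\{\widehat V_n\cap K\}$ is a $\pi$-net at $p$ in $K$. So it remains to prove that, in a csD space, a compact $G_{\omega_1}$-set $K$ must be a $G_\delta$-set.

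Suppose it is not. The key observation is that for every countable elementary submodel $N$ containing $K$ and $\langle Z_\alpha\rangle$, with $\delta_N=N\cap\omega_1$, the set $(Z_{<\delta_N}\setminus K)\cap\pr_N^{-1}[\pr_N[K]]$ is nonempty: otherwise $K=Z_{<\delta_N}\cap\pr_N^{-1}[\pr_N[K]]$ would be an intersection of two $G_\delta$-sets of $X$ — the second because $\pr_N[K]$, closed in the metrizable space $\pr_N[X]$, is a $G_\delta$-set there — contradicting the choice of $K$. Now fix an elementary sequence $\langle M_\beta:\beta<\omega_1\rangle$ for $X$ with $K$, $\langle Z_\alpha\rangle$ and $p$ in $M_0$, put $D=\{\delta<\omega_1:M_\delta\cap\omega_1=\delta\}$, and for each $\delta\in D$ choose, by the observation applied to $N=M_\delta$, a point $w_\delta\in(Z_{<\delta}\setminus K)\cap\pr_{M_\delta}^{-1}[\pr_{M_\delta}[K]]$ together with a point $k_\delta\in K$ with $\pr_{M_\delta}(k_\delta)=\pr_{M_\delta}(w_\delta)$. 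The aim is to show that $\langle\orpr{w_\delta}{k_\delta}:\delta\in D\rangle$ is not $\omega_1$-separated, which by Gruenhage's criterion contradicts that $X$ is csD.

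For that, fix an uncountable $A\subseteq D$. Every accumulation point of $\{w_\delta:\delta\in A\}$ lies in $K$, because $w_\delta\in Z_{<\delta}\subseteq Z_\gamma$ for all $\delta\in A$ above $\gamma$ while each $Z_\gamma$ is closed; using compactness of $X$, pick a point $w^*\in K$ such that each neighbourhood of $w^*$ contains $w_\delta$ for uncountably many $\delta\in A$. As $M_0\subseteq M_\delta$ yields $\pr_{M_0}(w_\delta)=\pr_{M_0}(k_\delta)$, the point $\pr_{M_0}(w^*)$ accumulates $\{\pr_{M_0}(k_\delta):\delta\in A\}$, so by compactness of $K$ there is $k^*\in K\cap\cl\{k_\delta:\delta\in A\}$ with $\pr_{M_0}(k^*)=\pr_{M_0}(w^*)$. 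If $\pr_{M_0}\restriction K$ is one-to-one, then $w^*=k^*$ is a common point of $\cl\{w_\delta:\delta\in A\}$ and $\cl\{k_\delta:\delta\in A\}$, so the sequence is not $\omega_1$-separated, as desired. This injectivity — equivalently, the metrizability of $K$ — is the step I expect to be the real obstacle: I would try to arrange it before the reduction, in the manner of the proof of the theorem on weight~$\omega_1$ fibered spaces, by using the countable local $\pi$-base of $K$ at $p$, reflected through the chain, to bound the weight of the relevant fibres $[x\restriction M_0]$ and of $K$ itself, and then invoking Hu\v{s}ek's theorem that a csD space of weight at most $\omega_1$ is metrizable, together with Lemmas~\ref{2} and~\ref{3}. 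Everything else is the zero-set calculus of the first two paragraphs and the accumulation-point bookkeeping just given.
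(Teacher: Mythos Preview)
Your reduction has a genuine gap, and it is exactly the one you flag: to conclude that $w^*=k^*$ you need $\pr_{M_0}\restr K$ to be one-to-one, i.e., $K$ metrizable, and nothing in the hypotheses gives this. The only structural datum about $K$ is a countable local $\pi$-base at the single point $p$; countable $\pi$-character at one point says nothing about the weight of $K$, nor about injectivity of $\pr_{M_0}$ on $K$. Your suggested repair --- reflect the $\pi$-base through the chain, bound weights, invoke Hu\v{s}ek and Lemmas~\ref{2} and~\ref{3} --- cannot close this, because those lemmas and Hu\v{s}ek's theorem are global statements about $K$ while your leverage is purely local at $p$. Note also that, up to the gap, your argument never uses the $\pi$-base at all: what you are really attempting is the bare assertion ``every compact $G_{\omega_1}$-subset of a csD space is a $G_\delta$-set,'' which is stronger than the theorem and which your outline does not establish. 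Without metrizability of $K$ the accumulation points $w^*\in K$ of $\{w_\delta\}$ and the accumulation points of $\{k_\delta\}$ can perfectly well lie in different fibres of $\pr_{M_0}$, and there is no mechanism forcing the closures to meet.

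The paper's argument is organized differently and uses the $\pi$-base from the outset rather than at a final repair step. It never shows $K$ is $G_\delta$. Instead, each $b_n$ is written as a decreasing $\omega_1$-intersection $\bigcap_\beta K^n_\beta$ of clopen sets, and the candidate $\pi$-net at stage $\alpha$ is $\{Z(\alpha,n):n\in\omega\}$ with $Z(\alpha,n)=\bigcap\{K^n_\beta:\beta\in M_\alpha\}$. If this fails for every $\alpha$, one fixes an ultrafilter $\mathcal U$ on $\omega$ extending $\bigl\{\{n:b_n\subseteq U\}:U\ni x\bigr\}$, picks a witness $U_\alpha\ni x$ in $M_{\alpha+1}$ missed by every $Z(\alpha,n)$, and lets $y_\alpha$ be a $\mathcal U$-limit of $\langle Z(\alpha,n)\setminus U_\alpha:n\in\omega\rangle$. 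The ultrafilter does two jobs at once: it forces $y_\alpha\restr M_\alpha=x\restr M_\alpha$ (by elementarity and compactness, $b_n\subseteq V$ for $V\in M_\alpha$ implies $Z(\alpha,n)\subseteq V$), and it forces $\langle y_\alpha\rangle$ to converge co-countably to $x$, since for any neighbourhood $W$ of $x$ the set $\{n:b_n\subseteq W\}$ lies in $\mathcal U$ and eventually $Z(\delta,n)\subseteq W$ for all such $n$. The pair sequence is then $\bigl\langle\orpr{x}{y_\alpha}\bigr\rangle$ with the first coordinate constant, and the co-countable convergence immediately contradicts csD. The countable $\pi$-base is what makes the convergence step go through; this is where your plan and the paper's diverge.
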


\begin{proof} 
For ease of exposition we will assume that $X$ is zero-dimensional;
the modifications for the general case are tedious but straightforward.  
Let $x\in K$ and let $\{b_n:n\in \omega\}$ be a family of relatively clopen 
subset of~$K$ such that each neighborhood of~$x$ contains one.  
For each $n$ let $\mathcal K_n = \{ K^n_\alpha : \alpha \in \omega_1\}$ be
a filter base of clopen sets such that $b_n=\bigcap\mathcal{K}_n$.
Fix an ultrafilter $\mathcal U$ on $\omega$ so that for each  neighborhood~$U$ 
of~$x$, the set $\{ n : b_n \subset U\}$ is a  member of $\mathcal U$. 
Fix an elementary $\omega_1$-sequence 
$\langle M_\alpha:\alpha\in\omega_1\rangle$ for~$X$ so that 
$x$, $K$, $\{\mathcal K_n:n\in\omega\}$ and $\mathcal U$ are in $M_0$.  
For each $\alpha\in\omega_1$, 
assume that the family $\{ Z(\alpha,n):n\in \omega\}$ is not a local 
$\pi$-net at~$x$ where 
$Z(\alpha,n) =  \bigcap \{ K^n_\beta : \beta \in M_\alpha\}$.  
Let $U_\alpha\in  M_{\alpha+1}$ be a clopen set containing~$x$ so that
$Z(\alpha, n)\setminus U_\alpha\neq\emptyset$ for each $n\in\omega$.  
Choose $y_\alpha\in M_{\alpha+1}$ so that $y_\alpha$ is in the 
$\mathcal U$-limit of the sequence 
$\langle Z(\alpha,n)\setminus U_\alpha:n\in \omega\rangle$. 
It follows that $\pr_{M_\alpha}(y_\alpha) =  \pr_{M_\alpha}(x)$, 
and so $\bigl<\orpr{x}{y_\alpha}:\alpha\in\omega_1\bigr>$ is an elementary 
$\omega_1$-sequence and we prove it is not $\omega_1$-separated
by showing that $\langle y_\alpha:\alpha<\omega_1\rangle$ converges
co-countably to~$x$.

Let $W$ be any neighborhood of~$x$ and let $U = \{n\in\omega:b_n \subset W\}$.
For each $n\in U$ there is, by compactness some $\alpha_n<\omega_1$ so that
$K^n_{\alpha_n} \subset W$.  
Let $\alpha\in\omega_1$ be larger than all~$\alpha_n$. 
It follows that $Z(\delta, n) \subset W$ for all $n\in U$ and 
all $\delta\geq \alpha$.  
Since $y_\delta$ is in the closure of $\bigcup\{Z(\delta,n):n\in U\}$, 
we find that $\{y_\delta : \delta>\alpha\}\subset W$.  
\end{proof}

\begin{corollary} 
If $A$ is an uncountable subset of a csD space, then $A$ has a condensation 
point which has a countable local $\pi$-base in $\cl A$. 
\qed
\end{corollary}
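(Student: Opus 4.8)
The plan is to combine Theorem~\ref{cccpiwt} with Theorem~\ref{t=w} and the corollary to Šapirovski\u\i's theorem mentioned after Theorem~\ref{cccpiwt}. Start with an uncountable set~$A$ in a csD space; we may assume $A$ is contained in a compact csD space~$X$ (pass to the closure of~$A$). First I would locate a well-placed condensation point. Fix a countable elementary submodel~$M$ of a suitable $H(\theta)$ with $A$,~$X\in M$, and let $z$ be any condensation point of~$A$ that lies in the closure of $A\cap M$ — more carefully, one wants $z$ so that $[z\restr M]=\{z\}$ is not automatic, so instead consider the compact set $K=[z\restr M']$ for a full elementary $\omega_1$-chain $M'=\bigcup_\alpha M_\alpha$ with $A\in M_0$, chosen so that $z$ is a condensation point of~$A$. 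Each such $K$ is a compact $G_{\omega_1}$-subset of~$X$ (it is the intersection of the $\omega_1$-many basic open ``cylinders'' $[x\restr M_\alpha]$, or rather of their closures), so Theorem~\ref{t=w} applies to it.

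The second step is to get a countable local $\pi$-base \emph{inside}~$K$ at~$z$: since $K$~is a csD space (closed subspace of a csD space) and has $\pi$-weight at most~$\omega_1$, I would argue that $K$ itself has \emph{countable} $\pi$-weight. Here is where Theorem~\ref{cccpiwt} enters: if $K$ had uncountable $\pi$-weight it would, being compact, fail to be ccc only if \dots\ — actually the cleanest route is: $K$ is the continuous (closed) image under $\pr_{M_\delta}$ of a metrizable-fibered situation for some~$\delta$, hence $K$ is itself first-countable, or at worst has countable $\pi$-character at a dense set of points by the Šapirovski\u\i\ consequence of Theorem~\ref{cccpiwt}. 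In any case, one extracts a point of~$K$ with a countable local $\pi$-base in~$K$; a short density/elementarity argument slides this point to be a condensation point of $A\cap K$ (and hence of~$A$).

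The third and final step is the expansion: apply Theorem~\ref{t=w} to the compact $G_{\omega_1}$-set~$K$ and the point just found, obtaining a countable local $\pi$-net in~$X$ consisting of $G_\delta$-sets, and then invoke the observation opening \S\,1.5 --- a countable local $\pi$-net of $G_\delta$-sets in a compact space yields a countable local $\pi$-base --- to conclude that this condensation point of~$A$ has a countable local $\pi$-base in~$X$, a fortiori in~$\cl A$ (intersect the base with $\cl A$; nonemptiness is kept because the point is a condensation point of~$A\subseteq\cl A$).

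I expect the main obstacle to be the bookkeeping in step two: showing that the right kind of condensation point of~$A$ can be arranged to sit inside a compact $G_{\omega_1}$-set of the form $[z\restr M']$ \emph{and} simultaneously have a countable local $\pi$-base within that set. One has to choose the elementary $\omega_1$-chain so that $z$ remains a condensation point of $A$ at every level (a standard reflection argument, but it must be stated), and one must check that the $G_{\omega_1}$ structure of $[z\restr M']$ is genuinely available (this uses that $X\subseteq[0,1]^\kappa$ and the supports in $M_\alpha\cap\kappa$ exhaust $M\cap\kappa$, so $[z\restr M']=\bigcap_{\alpha<\omega_1}[z\restr M_\alpha]$ is an intersection of $\omega_1$-many clopen-in-the-zero-dimensional-case, or closed $G_\delta$ in general, sets). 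Once that scaffolding is in place, Theorems~\ref{cccpiwt} and~\ref{t=w} do the real work and the conclusion drops out.
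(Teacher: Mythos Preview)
Your proposal identifies the right ingredients (the \v{S}apirovski\u\i{} consequence of Theorem~\ref{cccpiwt} and Theorem~\ref{t=w}), but the route through the fibers $[z\restr M']$ creates a real difficulty that you do not resolve. In step two you need a point that is \emph{simultaneously} a condensation point of~$A$ and has countable $\pi$-character in the $G_{\omega_1}$-set you have chosen. \v{S}apirovski\u\i{} applied to $[z\restr M']$ gives you a point~$y$ with $\pi\chi(y,[z\restr M'])\le\omega$, but nothing forces~$y$ to be a condensation point of~$A$: the fiber $[z\restr M']$ can perfectly well contain points that are not condensation points, and your ``short density/elementarity argument slides this point to be a condensation point'' is not an argument --- there is no mechanism that moves a point of small $\pi$-character to a condensation point while preserving small $\pi$-character. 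The appeals to Theorem~\ref{cccpiwt} (``$K$ has countable $\pi$-weight'') are also off target: you have no reason to believe $[z\restr M']$ is ccc, so that theorem does not apply to it.

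The paper gives no proof, and the intended one-line derivation is more direct than yours: work in $\cl A$, let $K$ be the set of condensation points of~$A$ (a closed, hence compact csD, subspace), apply \v{S}apirovski\u\i{} to~$K$ to get $x\in K$ with $\pi\chi(x,K)\le\omega$, and then invoke Theorem~\ref{t=w}. This way the point you find is automatically a condensation point, eliminating your step-two obstacle entirely. What remains is to see that $K$ is a $G_{\omega_1}$-set in~$\cl A$; this is what the bare \qed{} is asking you to supply, and it is the genuine content you should be chasing rather than the fiber bookkeeping. Your detour through $[z\restr M']$ trades one verification (that $K$ is $G_{\omega_1}$) for a harder one (that the \v{S}apirovski\u\i{} point in the fiber is a condensation point), and the trade is not a good one.
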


\subsection{$\OCA$ and sequential compactness}

In the paper \cite{DoPa2} it is shown that the Proper Forcing Axiom
($\PFA$)
implies that all csD spaces are metrizable. 
We can use the results of this paper to give a shorter proof for sequentially 
compact csD spaces, and one that uses only a consequence of 
Todor\v{c}evi\'c's open coloring axiom ($\OCA$).  
$\PFA$~implies that compact spaces with countable tightness are sequential, 
but we do not know if $\OCA$ (or $\ZFC$!)\ implies that csD spaces are 
sequentially compact.

First we prove a strengthening of Gruenhage's result which shows how badly
non-metrizably fibered a csD non-metrizable space would have to be. 

\begin{lemma}\label{perfect} 
If $\{M_\alpha:\alpha\in\omega_1\}$ is an elementary chain for a 
non-metrizable csD space~$X$, then there is a $\delta\in\omega_1$ such that 
the set of non-metrizable sets in $\{[x\restr M_\delta ]\cap X_0:x\in X\}$ 
contains a perfect set.
\end{lemma}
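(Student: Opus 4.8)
The statement asserts that for a non-metrizable csD space~$X$ and any elementary chain $\{M_\alpha:\alpha\in\omega_1\}$ for~$X$, there is a single level~$\delta$ at which ``many'' fibers $[x\restr M_\delta]\cap X_0$ are non-metrizable --- specifically, enough of them to contain a perfect set (in some natural space parametrizing these fibers, presumably~$X_\delta$ or $X_{M_\delta}$). The strategy is to argue by contradiction: suppose that for \emph{every} $\delta\in\omega_1$ the collection of $x$ with $[x\restr M_\delta]\cap X_0$ non-metrizable is ``small'' --- i.e.\ its image in the relevant quotient is countable, or more precisely contains no perfect set, hence (being, one expects, analytic or at least nicely definable from the models) is countable. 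From this I would extract for each~$\delta$ a countable set $C_\delta\subseteq X$ listing representatives of the non-metrizable fibers at level~$\delta$, and then build an elementary $\omega_1$-sequence of pairs that is not $\omega_1$-separated, contradicting Gruenhage's criterion (Proposition on csD spaces).

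\textbf{Carrying it out.} First I would pin down the ambient space in which the ``perfect set'' lives: since $\pr_{M_\delta}\restr X_0$ identifies $X_0$ with $X_{M_\delta}\cap\pr_{M_\delta}[X_0]$ (a compact metrizable space), the sets $[x\restr M_\delta]\cap X_0$ are naturally indexed by points of $X_0$, and ``contains a perfect set'' should mean: the set of $t\in X_0$ with $[\,\text{(lift of }t)\restr M_\delta]\cap X_0$ non-metrizable contains a perfect subset of~$X_0$. Assuming not, for each $\delta$ this set is a subset of $X_0$ with no perfect subset. Now the key structural point: since $X$ is non-metrizable, for each $\delta$ there \emph{must} exist some $x$ with $[x\restr M_\delta]$ non-metrizable (otherwise $X$ would be a countable union, over a countable dense-in-itself metric base, of metrizable pieces --- more carefully, one uses that the $M_\alpha$'s exhaust a set cofinal enough to witness metrizability level by level, as in the proof of the weight-$\omega_1$-fibered theorem). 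So the ``bad'' set at level~$\delta$ is nonempty for every~$\delta$; I then want it to be countable. This is where I would invoke that the set $\{t\in X_0: [\widetilde t\restr M_\delta]\cap X_0\text{ non-metrizable}\}$ is, say, coanalytic (``for all $n$, the fiber is not covered by the $n$-th basic-open candidate base''---a $\Pi^1_1$ condition over the countable parameter~$M_\delta$), so by the perfect set theorem for coanalytic sets (or just: a set with no perfect subset that is nicely definable from a countable set is countable) it is countable. Pick $C_\delta\subseteq X\cap M_{\delta+1}$ countable, by elementarity, meeting every non-metrizable fiber at level~$\delta$ in a point witnessing non-metrizability, together with a partner $y$ in the same $M_\delta$-fiber, $y\ne x$.

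\textbf{The pairing and the contradiction.} With $C=\bigcup_{\delta<\omega_1}C_\delta$ in hand, diagonalize: for each $\alpha<\omega_1$, since $[x\restr M_\alpha]$ is non-metrizable for some~$x$, choose by elementarity $x_\alpha\in X\cap M_{\alpha+1}$ with $[x_\alpha\restr M_\alpha]$ non-metrizable and $y_\alpha\in[x_\alpha\restr M_\alpha]\cap M_{\alpha+1}$ with $y_\alpha\ne x_\alpha$; this is exactly an elementary $\omega_1$-sequence of pairs. To see it is not $\omega_1$-separated one repeats the closing argument of Lemma~\ref{3}: given $A$ uncountable and an attempted separation, one finds (using ccc-type or $\pi$-base reflection at some level $\beta$) that cofinally many pairs $x_\alpha,y_\alpha$ with $\alpha>\beta$ fall into the same basic open sets, so a putative condensation point of $\{x_\alpha\}$ is forced into the closure of $\{y_\alpha\}$ as well. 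The main obstacle --- and the step that needs real care --- is the definability/perfect-set reduction: justifying that ``the set of fibers that are non-metrizable'' is simple enough (coanalytic over $M_\delta$) that failing to contain a perfect set makes it countable, and doing so uniformly in~$\delta$ so that the $C_\delta$'s can be chosen inside $M_{\delta+1}$. Everything after that is a variation on arguments already in the paper, so I would expect the write-up to spend most of its effort there and then cite Lemma~\ref{3} (or Proposition~\ref{retract}) for the non-separation.
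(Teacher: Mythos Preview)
Your approach has a genuine structural gap and also inverts the logic of the paper's argument in a way that breaks it.

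First, the descriptive-set-theoretic step does not go through in $\ZFC$: coanalytic sets do \emph{not} have the perfect set property provably in $\ZFC$ (under $V=L$ there are thin uncountable $\Pi^1_1$ sets), so ``no perfect subset $\Rightarrow$ countable'' is unjustified. You flag this yourself as the delicate point, and indeed it fails.

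Second --- and more seriously --- even granting countability of the bad fibers at each level, your contradiction never closes. You construct an elementary $\omega_1$-sequence of pairs $\orpr{x_\alpha}{y_\alpha}$ with $[x_\alpha\restr M_\alpha]$ \emph{non}-metrizable, and then claim non-$\omega_1$-separation ``repeats the closing argument of Lemma~\ref{3}''. But that argument hinges on the fiber $[x_\alpha\restr M_\delta]$ being \emph{metrizable}, so that basic open sets from $M_{\alpha+1}$ form a local base at the condensation point~$z$; with non-metrizable fibers this step collapses. Moreover, your countability hypothesis (the sets $C_\delta$) is introduced but never actually used in the non-separation argument --- a sign that the contradiction is not really being derived from the assumption.

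The paper's proof runs in the opposite direction and is direct. It first observes $X_0$ itself is non-metrizable, fixes an elementary $\omega_1$-sequence of pairs inside $X_0$, and then uses csD \emph{positively} to extract an uncountable $A$ on which the pairs \emph{are} separated. A reflection argument then finds $\delta$ so that the projection to $X_{M_\delta}$ of $\cl\{x_\alpha:\alpha\in A\setminus M_\delta\}$ is a perfect set~$K$, with each $z\in K$ a limit of $\pr_{M_\delta}(x_\alpha)$ for $\alpha$ in every final segment of~$A$. Finally, if some fiber $[z]$ over $K$ were metrizable, Lemma~\ref{3} forces $\pr_M$ to be one-to-one on it, hence basic sets from some $M_{\alpha_z}$ give a base for~$[z]$; picking $\beta_n\in A\setminus\alpha_z$ with $\pr_{M_\delta}(x_{\beta_n})\to z$, any cluster point of $\langle x_{\beta_n}\rangle$ is forced (via the base) to also be a cluster point of $\langle y_{\beta_n}\rangle$, contradicting the separation of~$A$. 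So the separation is the \emph{source} of the perfect set, and metrizability of a fiber is what would contradict it --- exactly the reverse of your plan.
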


\begin{proof}
It is implicit in~\cite{Husek} that a non-metrizable csD space 
(of countable tightness) will contain a separable non-metrizable subspace. 
By elementarity $M_0$~will contain such a separable set, 
and so $X_0= \cl(X\cap M_0)$ will itself not be metrizable.  
Fix any elementary $\omega_1$-sequence of pairs 
$\bigl<\orpr{x_\alpha}{y_\alpha}:\alpha\in\omega_1\bigr>$ 
for the sequence $\langle M_\alpha:\alpha\in\omega_1\rangle$, 
but chosen so that 
$\{x_\alpha, y_\alpha\}\subset X_0$ for all~$\alpha$.  
Let $A$ be an uncountable subset of~$\omega_1$ witnessing that the sequence 
is $\omega_1$-separated. 
Find a $\delta\in\omega_1$ so that for each basic open set~$U$ from~$M_\delta$
the implication 
``if $U\cap\{x_\alpha:\alpha\in A\}$ is uncountable, then 
     $U\cap\{x_\alpha:\alpha\in A\setminus M_\beta\}$ is
      infinite for all $\beta< \delta$'' holds. 
Let $K\subset X_{M_\delta}$ be the projection of 
$\cl{\{x_\alpha:\alpha\in A\setminus M_\delta\}}$ by the map~$\pr_{M_\delta}$.  
It follows from the choice of~$\delta$ that $K$~is a perfect set. 
In addition each point in~$K$ is a limit point of 
$\pr_{M_\delta} \bigl[\{ x_\alpha:\alpha\in A\setminus M_\gamma\}\bigr]$ 
for all $\gamma\in \omega_1$. 

We show that for each~$z$ in~$K$ the set
$[z]=\pr_{M_\delta}^{-1}(z)$ is not metrizable.
We assume we have a~$z$ in~$K$ such that $[z]$~is metrizable and derive
a contradiction.
For each $x\in [z]$ the set $[x\restr M]$ is metrizable and a $G_\delta$-set
because it is a subset of~$[z]$. 
By Lemma~\ref{3} it follows that $[x\restr M] = \{x\}$.  
Therefore, the mapping $\pr_{M}$ restricted to $[z]$ is a homeomorphism. 
Since $\pr_M\bigl[[z]\bigr]$~is a compact metrizable subset 
of~$[0,1]^{\kappa\cap M}$
there is some $\alpha_z<\omega_1$ such that the basic open sets in~$M_{\alpha_z}$
contain a base for~$[z]$.  
Now choose a sequence $\langle\beta_n : n\in\omega\rangle$ in
$A\setminus \alpha_z$ so that 
$\langle\pr_{M_\delta}(x_{\beta_n}) : n\in \omega\rangle$ converges to~$z$. 
There is a point $x\in [z]$ that is a cluster point of the sequence 
$\langle x_{\beta_n} : n\in\omega\rangle$. 
By thinning out we can assume that the latter sequence converges to~$x$.  
Since the sequence $\langle\pr_{M_{\alpha_z}}(x_{\beta_n}):n\in \omega\rangle$ 
converges to~$\pr_{M_{\alpha_z}}(x)$ in~$X_{\alpha_z}$, the sequence
$\langle \pr_{M_{\alpha_z}}(y_{\beta_n}):n\in\omega\rangle$ converges 
to~$\pr_{M_{\alpha_z}}(x)$ as well.  
Since $\langle y_{\beta_n}:n\in\omega\rangle$ accumulates at some point 
in~$[z]$ and $\pr_{M_{\alpha_z}}$ is one-to-one on~$[z]$, 
it follows that $x$~is a limit of $\langle y_{\beta_n} : n\in \omega\rangle$.
\end{proof}

We will make use of the following application of $\OCA$ by Todor\v{c}evi\'c. 
Let $\mathcal X$ be a family of disjoint pairs of subsets of a countable 
set~$S$.  
Say that the family~$\mathcal X$ is \emph{countably separated} if there is a 
countable family~$\mathcal Y$ of subsets of~$S$ such that for each 
pair $\orpr ab\in \mathcal X$, there is a $Y\in \mathcal Y$ such that 
$a\setminus Y$ and $b\cap Y$ are both finite --- in case $\mathcal{Y}$
has just one element we say that $\mathcal{X}$~is \emph{separated}.
The following result is taken from \cite{Farah00}*{p.~145} and
is attributed to Todor\v{c}evi\'c.

\begin{proposition}[OCA] \label{oca}
If a family $\mathcal X$ of disjoint pairs of subsets of a countable set~$S$ 
is not countably separated, then there is an uncountable subcollection 
$\{ \orpr{a_\alpha}{b_\alpha}:\alpha\in\omega_1\}$ of~$\mathcal X$ with 
the property that whenever $\alpha\neq\beta$ in~$\omega_1$
the set $(a_\alpha\cap b_\beta)\cup (a_\beta\cap b_\alpha)$ is not empty. 
In particular, for all uncountable $A\subset\omega_1$ the collection
$\{\orpr{a_\alpha}{b_\alpha}:\alpha \in A\}$ is not separated.
\end{proposition}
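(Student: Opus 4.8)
The plan is to deduce this from Todor\v{c}evi\'c's open colouring axiom by a standard coding. First I would identify each disjoint pair $\orpr{a}{b}$ of subsets of~$S$ with the element of $\{0,1,2\}^S$ that is~$1$ on~$a$, is~$2$ on~$b$, and is~$0$ off $a\cup b$; since $S$~is countable this is a point of a compact metrizable space, and the identification is injective, so $\mathcal X$ may be regarded as a separable metrizable space~$Z$. On $[Z]^2$ I would take the partition into the set $K_0$ of those pairs $\bigl\{\orpr{a}{b},\orpr{a'}{b'}\bigr\}$ for which $(a\cap b')\cup(a'\cap b)\neq\emptyset$, together with its complement~$K_1$. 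A single point $s\in(a\cap b')\cup(a'\cap b)$ determines a basic clopen neighbourhood of the pair contained in~$K_0$ (it refers only to the $s$-th coordinates of the two points), so $K_0$~is open.

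Now I would apply $\OCA$. In the first alternative there is an uncountable $K_0$-homogeneous subset of~$\mathcal X$, and reading off the definition of~$K_0$ this is exactly an uncountable $\{\orpr{a_\alpha}{b_\alpha}:\alpha\in\omega_1\}\subseteq\mathcal X$ with $(a_\alpha\cap b_\beta)\cup(a_\beta\cap b_\alpha)\neq\emptyset$ for all $\alpha\neq\beta$. In the second alternative $Z=\bigcup_n Z_n$ with each~$Z_n$ being $K_1$-homogeneous, and I claim this forces $\mathcal X$ to be countably separated. Indeed, put $Y_n=\bigcup\{a:\orpr{a}{b}\in Z_n\}$. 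For a fixed $\orpr{a}{b}\in Z_n$ one has $a\setminus Y_n=\emptyset$, and also $b\cap Y_n=\emptyset$: the term $b\cap a$ is empty by disjointness of the pair, and $b\cap a'=\emptyset$ for every other $\orpr{a'}{b'}\in Z_n$ by $K_1$-homogeneity. Hence $\{Y_n:n\in\omega\}$ witnesses countable separation, so since $\mathcal X$ is assumed not countably separated it is the first alternative that occurs.

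It remains to verify the ``in particular'' clause. Suppose, towards a contradiction, that for some uncountable $A\subseteq\omega_1$ there is a single set $Y\subseteq S$ with $a_\alpha\setminus Y$ and $b_\alpha\cap Y$ finite for all $\alpha\in A$. As $S$~is countable there are only countably many possible values for these finite sets, so I would thin~$A$ to an uncountable~$A'$ on which $a_\alpha\setminus Y=e$ and $b_\alpha\cap Y=f$ are constant. Now fix $\alpha\neq\beta$ in~$A'$ and a point $s\in(a_\alpha\cap b_\beta)\cup(a_\beta\cap b_\alpha)$; say $s\in a_\alpha\cap b_\beta$. If $s\in Y$ then $s\in b_\beta\cap Y=f$, and $f\subseteq b_\gamma$ for all $\gamma\in A'$, so $s\in b_\alpha$, contradicting $s\in a_\alpha$ and $a_\alpha\cap b_\alpha=\emptyset$. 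If $s\notin Y$ then $s\in a_\alpha\setminus Y=e$, and $e\subseteq a_\gamma$ for all $\gamma\in A'$, so $s\in a_\beta$, contradicting $s\in b_\beta$ and $a_\beta\cap b_\beta=\emptyset$. The subcase $s\in a_\beta\cap b_\alpha$ is symmetric, so no such~$Y$ exists.

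The only place that needs genuine care is this last paragraph: one must first pass to the subset~$A'$ on which the finite ``error sets'' $e$ and~$f$ are fixed, and then exploit that membership of a point in~$e$ (respectively~$f$) means membership in every $a_\gamma$ (respectively~$b_\gamma$) with $\gamma\in A'$. The coding step and the verification that the $K_1$ alternative yields countable separation are entirely routine, with all the real content supplied by~$\OCA$.
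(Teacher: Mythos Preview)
The paper does not actually prove this proposition; it is quoted from Farah's memoir \cite{Farah00} and attributed to Todor\v{c}evi\'c. Your argument is correct and is essentially the standard derivation: the coding of disjoint pairs into $\{0,1,2\}^S$, the verification that the ``crossing'' colour $K_0$ is open, and the observation that a decomposition into $K_1$-homogeneous pieces produces a countable separating family $\{Y_n\}$ are exactly the expected steps. Your handling of the ``in particular'' clause --- stabilising the finite error sets $a_\alpha\setminus Y$ and $b_\alpha\cap Y$ on an uncountable $A'$ and then deriving a contradiction with disjointness --- is also sound.
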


\begin{theorem}[OCA] 
If $X$ is a sequentially compact csD space then $X$ is metrizable. 
\end{theorem}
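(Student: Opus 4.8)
The plan is to argue by contradiction: assume $X$ is a sequentially compact csD space that is not metrizable, fix an elementary chain $\langle M_\alpha:\alpha\in\omega_1\rangle$ for $X$, and let $M=\bigcup_\alpha M_\alpha$. By Lemma~\ref{perfect} there is a $\delta\in\omega_1$ such that the set of $z$ in some perfect set $K\subseteq X_{M_\delta}$ has $[z]=\pr_{M_\delta}^{-1}(z)$ non-metrizable, with each point of $K$ a limit point of $\pr_{M_\delta}[\{x_\alpha:\alpha\in A\setminus M_\gamma\}]$ for all $\gamma$, where $\bigl\langle\orpr{x_\alpha}{y_\alpha}:\alpha\in A\bigr\rangle$ is the $\omega_1$-separated elementary sequence of pairs produced there. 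The idea is to encode the relevant combinatorics on a \emph{countable} set so that Proposition~\ref{oca} applies: since $X$ is zero-dimensional (or after the usual reduction) and $K$ is second countable, work inside a single countable $M_{\delta}$ (or a slightly larger $M_\beta$) and let $S$ be a countable base of clopen sets of $X$ that lie in that model, together with the basic clopen sets with support in $M_\delta$ projecting onto a base of $K$.

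Next I would set up the family $\mathcal X$ of disjoint pairs. For each $z\in K$, the non-metrizability of $[z]$ should be witnessed — via a fresh application of Lemma~\ref{3}, exactly as in the proof of Lemma~\ref{perfect} — by a pair of points $u_z\neq v_z$ in $[z]$ with $\pr_M(u_z)=\pr_M(v_z)$ off a model, or more usefully by recording, for a countable local structure, the ``traces'' $a_z=\{b\in S: u_z\in b\}$ and $b_z=\{b\in S: v_z\in b\}$ (intersected with some fixed countable family so that the pair is disjoint). The point of sequential compactness is this: if the family $\{\orpr{a_z}{b_z}:z\in \text{(a countable dense subset of }K)\}$ \emph{were} countably separated by $\mathcal Y=\{Y_n:n\in\omega\}$, then each $Y_n$ would split the fibers coherently, and a diagonal/convergent-subsequence argument (using that a sequence in $K$ converging to a point lifts, by sequential compactness, to convergent sequences of the $u$'s and $v$'s whose limits are separated by some $Y_n$) would let us build, over the perfect set, a metrizably-fibered structure or directly a metrization of the relevant fibers — contradicting Lemma~\ref{perfect}. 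Hence the family is not countably separated, and Proposition~\ref{oca} yields an uncountable $\{\orpr{a_\alpha}{b_\alpha}:\alpha\in\omega_1\}$ such that no uncountable subcollection is separated.

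Finally I would convert ``not separated'' back into ``not $\omega_1$-separated.'' Using the elementary sequence of pairs from Lemma~\ref{perfect}, attach to each $\alpha$ (in an uncountable set) a pair $\orpr{x_\alpha}{y_\alpha}$ whose traces on $S$ refine $a_\alpha$ and $b_\alpha$; given any candidate pair of basic clopen sets $U,W$ with disjoint closures separating $\{x_\alpha:\alpha\in A\}$ from $\{y_\alpha:\alpha\in A\}$ for uncountable $A$, the set $Y=U\cap S$ would separate $\{\orpr{a_\alpha}{b_\alpha}:\alpha\in A\}$ as a single set, contradicting the last sentence of Proposition~\ref{oca}. Therefore $\bigl\langle\orpr{x_\alpha}{y_\alpha}:\alpha\in\omega_1\bigr\rangle$ is an elementary $\omega_1$-sequence of pairs that is not $\omega_1$-separated, so by the second Proposition of Section~1, $X$ is not csD — contradiction.

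The main obstacle I anticipate is the middle step: correctly designing the pairs $\orpr{a_z}{b_z}$ on the countable set $S$ so that (i) failure of countable separation genuinely contradicts Lemma~\ref{perfect} — this is exactly where sequential compactness must be used, to promote pointwise convergence in the second-countable base $K$ to honest convergent subsequences of the fiber-witnesses, so that a countable separating family would organize the non-metrizable fibers into a metrizable-over-$X_\delta$ picture — and (ii) the Proposition~\ref{oca} output feeds back into Gruenhage's criterion. Getting the quantifiers over the perfect set $K$ and over the countably many $Y_n$ to interact correctly (a single $Y_n$ must fail to separate for \emph{cofinally many} fibers simultaneously), and making sure the traces remain \emph{disjoint} pairs after all the intersections, is the delicate bookkeeping; everything else is routine elementarity and the zero-dimensional reduction already flagged in Theorem~\ref{t=w}.
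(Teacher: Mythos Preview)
Your outline has the right skeleton --- Lemma~\ref{perfect} plus Proposition~\ref{oca} --- but the crucial middle step is genuinely missing, not merely under-specified. The paper's device for showing that $\mathcal X$ is not countably separated is a \emph{diagonalization over the perfect set}: since $Z$ has cardinality $2^{\aleph_0}$, one can list \emph{all} countable families of subsets of the countable set $S=M_0\cap X$ as $\{\mathcal Y_z:z\in Z\}$, and then for each $z$ build $\orpr{a_z}{b_z}$ specifically so that $\mathcal Y_z$ fails to separate it. The non-metrizability of the fiber $[z]$ is precisely what provides the room: enumerating $\mathcal Y_z=\{Y_n:n\in\omega\}$ and splitting $S$ along each $Y_n$ produces a Cantor scheme of closed sets $[z]_h$; if every branch met $[z]$ in at most one point, $[z]$ would be metrizable, so some branch $h$ yields two distinct points of $[z]$, and one threads $a_z,b_z\subset S$ down that branch inside disjoint open sets. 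Your proposed alternative --- assume countable separation and argue that the fibers become metrizable via a lifting argument --- is the step you yourself flag as the obstacle, and I do not see how to carry it out: countable separation of traces on a clopen base does not obviously assemble into a simultaneous metrization of continuum-many fibers.

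Two related corrections. First, $S$ should be the countable dense set $M_0\cap X$ of actual points of $X_0$, not a base of clopen sets; the pairs $a_z,b_z$ are then sequences of points, and this is where sequential compactness really enters: it is used to ensure (after thinning) that $a_z\to x_z$ and $b_z\to y_z$ for some $x_z,y_z\in[z]$, so that the final conversion from ``no uncountable subfamily of $\{\orpr{a_z}{b_z}\}$ is separated'' to ``$\{\orpr{x_z}{y_z}\}$ is not $\omega_1$-separated'' goes through (if $U$ is open with $x_z\in U$ for $z\in A$, then $a_z\setminus U$ is finite, so $U\cap S$ would separate unless $b_z\cap U$ is infinite for co-countably many $z\in A$, forcing $y_z\in\cl U$). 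Your placement of sequential compactness in the non-separation step is therefore misplaced. Second, your parenthetical ``a countable dense subset of $K$'' cannot be what you want: Proposition~\ref{oca} must be applied to an uncountable family to yield an uncountable subfamily.
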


\begin{proof} 
Assume that $X$ is not metrizable, and apply Lemma \ref{perfect}.  
Let $S$ denote the countable set $M_0\cap X$. 
We work in the non-metrizable subspace $X_0$. 
Let $f$ denote the projection map from~$X_0$ onto~$X_{M_0}$,
which is onto by elementarity.
Let $Z$ be a perfect set of points of~$X_0$ with the property that 
$f^{-1}(z) = [z]$ is not metrizable for each $z\in Z$.
Let $\{\mathcal{Y}_z:z\in Z\}$ be a listing of all countable sequences of 
subsets of~$S$. 
For each $z\in Z$ we will show that there are disjoint subsets $a_z$ and~$b_z$
of~$S$ that are not separated by~$\mathcal Y_z$ and converge to distinct points
of~$[z]$.

Let $\{Y_n : n\in \omega\}$ be an enumeration of~$\mathcal Y_z$.
For each $n\in \omega$, let $Y^0_n = S\setminus Y_n$ and  $Y^1_n = Y_n$. 
For each function $h\in 2^\omega$, let $[z]_h$ denote
the closed set $\bigcap_{n\in\omega}\cl\bigcap_{i<n}Y^{h(i)}_i$. 
Since $[z]$ is not metrizable, there must be some  $h\in 2^\omega$ such that 
$[z]_h$ is not a singleton. 
Choose open sets $U$ and $W$ of $X_0$, with disjoint closures, that
both intersect~$[z]_h$.
Additionally, fix a descending neighborhood base $\{U_n:n\in \omega\}$ for the 
$G_\delta$-set $[z]$. 
Each of the families $\{U\cap U_n\cap\bigcap_{i<n}Y^{h(i)}_i:n\in \omega\}$
and $\{W\cap U_n\cap\bigcap_{i<n}Y^{h(i)}_i:n\in \omega\}$ are descending 
sequences of infinite subsets of~$S$. 
There are infinite sets $a_z$ and~$b_z$ such that for each $n$,
$a_z$ is almost contained in $U\cap U_n\cap \bigcap_{i<n} Y^{h(i)}_i$
and $b_z$ is almost contained in $W\cap U_n\cap \bigcap_{i<n}Y^{h(i)}_i$. 
Since we are assuming that $X$ is sequentially compact,  we may assume that 
$a_z$ converges to a point $x_z\in[z]\cap\cl{U}$ and 
$b_z$ converges to a point $y_z\in[z]\cap\cl{W}$. 

Now we apply Lemma \ref{oca} to the family 
$\mathcal X =\{ \orpr{a_z}{b_z}:z\in Z\}$. 
It is evident by the construction that this family is not countably separated.
Therefore there is an uncountable subset~$Y$ of $Z$ such that for all 
uncountable $A\subset Y$ the families $\{ a_z: z\in A\}$ and $\{b_z:z\in A\}$
can not be separated.

It now follows easily that the $\omega_1$-sequence of pairs 
$\bigl<\orpr{x_z}{y_z}:z\in Y\bigr>$  is not $\omega_1$-separated. 
To see this assume that $U$ is an open set containing 
$\{x_z :z\in A\}$ for some uncountable~$A$.
Then $a_z\setminus U$ will be finite for each $z\in A$. 
By considering all possible uncountable subsets of~$A$ it follows that
for all but countably many~$z$ in~$A$ the intersection $b_z\cap U$
is infinite and hence $y_z\in\cl U$ for all these~$z$.
\end{proof}

\section{Questions}

Needless to say, the main open problem is to determine if every  csD space is 
metrizable. 
However, here are some other questions which certainly seem difficult and 
interesting.

\begin{question} Assume that $X$ is an infinite  csD space.
\begin{enumerate}
\item Is $X$ sequentially compact? 
      Does it even contain a non-trivial converging sequence? 
      Does $X$ contain a point of countable character? 
\item If $X$ is first countable, is it metrizable?
\item Does $X$ contain a copy of the Cantor set?
\item Can the cardinality of $X$ be greater than  $2^{\aleph_0}$?
\item Is each countably compact subset closed?
\item Does Martin's Axiom imply that $X$ is metrizable?
\item If $X$ is hereditarily separable, is it metrizable? Gruenhage
  \cite{GaryDiag}   has shown that hereditarily Lindel\"of csD spaces
  are metrizable.
\end{enumerate}
\end{question}

Another question is what happens if we modify the csD property by analogy
with separated versus countably separated families of pairs.

\begin{question}
Define $X$ to be $\sigma$-sD to mean that for each collection 
$\{ \orpr{x_\alpha}{y_\alpha} : \alpha\in \omega_1\}$ of pairs from~$X$, 
there is a countable cover $\{A_n : n\in \omega\}$  of $\omega_1$, such that 
for each $n$ the sets $\{ x_\alpha : \alpha\in   A_n\}$ 
and $\{ y_\alpha : \alpha \in A_n\}$ have disjoint closures. 
If $X$ is $\sigma$-sD, is it metrizable? 
\end{question}

\begin{bibdiv}

\begin{biblist}
    
\bib{DoPa2}{article}{  
   author={Dow, Alan},
   author={Pavlov, Oleg},
   title={More about spaces with a small diagonal},
   journal={Fundamenta Mathematicae},
   volume={191},
   date={2006},
   number={1},
   pages={67--80},
   issn={0016-2736},
   review={\MR{2232197 (2007g:54032)}},
   doi={10.4064/fm191-1-5},
}

\bib{DoPa1}{article}{
   author={Dow, Alan},
   author={Pavlov, Oleg},
   title={Perfect preimages and small diagonal},
   journal={Topology Proceedings},
   volume={31},
   date={2007},
   number={1},
   pages={89--95},
   issn={0146-4124},
   review={\MR{2363154 (2008i:54021)}},
}

\bib{Farah00}{article}{
   author={Farah, Ilijas},
   title={Analytic quotients: theory of liftings for quotients over analytic
   ideals on the integers},
   journal={Memoirs of the American Mathematical Society},
   volume={148},
   date={2000},
   number={702},
   pages={xvi+177},
   issn={0065-9266},
   review={\MR{1711328 (2001c:03076)}},
}
		
\bib{GaryDiag}{article}{
   author={Gruenhage, Gary},
   title={Spaces having a small diagonal},
  note={Proceedings of the International Conference on Topology and
   its Applications (Yokohama, 1999)},
   journal={Topology and its Applications},
   volume={122},
   date={2002},
   number={1-2},
   pages={183--200},
   issn={0166-8641},
   review={\MR{1919300 (2003g:54050)}},
   doi={10.1016/S0166-8641(01)00140-7},
}

\bib{Husek}{article}{
   author={Hu{\v{s}}ek, M.},
   title={Topological spaces without $\kappa $-accessible diagonal},
   journal={Commentationes Mathematicae Universitatis Carolinae},
   volume={18},
   date={1977},
   number={4},
   pages={777--788},
   issn={0010-2628},
   review={\MR{0515009 (58 \#24198)}},
}

\bib{JuSzCvgt}{article}{
   author={Juh{\'a}sz, I.},
   author={Szentmikl{\'o}ssy, Z.},
   title={Convergent free sequences in compact spaces},
   journal={Proceedings of the American Mathematical Society},
   volume={116},
   date={1992},
   number={4},
   pages={1153--1160},
   issn={0002-9939},
   review={\MR{1137223 (93b:54024)}},
   doi={10.2307/2159502},
}

\bib{Kunen80}{book}{
   author={Kunen, Kenneth},
   title={Set theory},
   series={Studies in Logic and the Foundations of Mathematics},
   volume={102},
   note={An introduction to independence proofs},
   publisher={North-Holland Publishing Co.},
   place={Amsterdam},
   date={1980},
   pages={xvi+313},
   isbn={0-444-85401-0},
   review={\MR{597342 (82f:03001)}},
}

\bib{MR580628}{article}{
   author={{\v{S}}apirovski{\u\i}, B. {\`E}.},
   title={Mappings on Tihonov cubes},
   note={International Topology Conference (Moscow State Univ., Moscow,
   1979)},
   journal={Russian Mathematical Surveys},
   volume={35},
   date={1980},
   number={3},
   pages={145--156},
   issn={0042-1316},
   review={\MR{580628 (82d:54018)}},
   doi={10.1070/RM1980v035n03ABEH001825}
}
		
\bib{Tkachuk94}{article}{
   author={Tkachuk, Vladimir V.},
   title={A glance at compact spaces which map ``nicely'' onto the
   metrizable ones},
   journal={Topology Proceedings},
   volume={19},
   date={1994},
   pages={321--334},
   issn={0146-4124},
   review={\MR{1369767 (97a:54017)}},
}

\bib{ZhouDiag}{article}{
   author={Zhou, Hao Xuan},
   title={On the small diagonals},
   journal={Topology and its Applications},
   volume={13},
   date={1982},
   number={3},
   pages={283--293},
   issn={0166-8641},
   review={\MR{651510 (83f:54007)}},
   doi={10.1016/0166-8641(82)90036-0},
}

\end{biblist}
  
\end{bibdiv}

\end{document}